\numberwithin{equation}{section}
\newcommand \supp{\operatorname{supp}}
\newcommand \cochord{\operatorname{co-chord}}
\newcommand \reg{\operatorname{reg}}
\newcommand \Tor{\operatorname{Tor}}
\newcommand \ma{\operatorname{m}}
\newcommand \MM{\operatorname{min-match}}
\newcommand\PP{\mathcal{P}}
\newcommand \FA{\mathcal{F}}
\newcommand \K{\mathbb{K}}
\newtheorem{theorem}{Theorem}[section]
\newtheorem{definition}[theorem]{Definition}
\newtheorem{lemma}[theorem]{Lemma}
\newtheorem{example}[theorem]{Example}
\newtheorem{obs}[theorem]{Observation}
\newtheorem{question}[theorem]{Question}
\newtheorem{remark}[theorem]{Remark}
\newtheorem{dis}[theorem]{Discussion}
\newtheorem{corollary}[theorem]{Corollary}
\newtheorem{setup}[theorem]{Set-up}
\newtheorem*{notation*}{Notation}
\newtheorem{notation}[theorem]{Notation}
\begin{document}

\title{Bounds for the regularity of product of edge ideals}

\author[A.~Banerjee]{Arindam Banerjee}

\address{Department of Mathematics, 
Indian Institute of Technology Kharagpur, 721302, India}
\email{123.arindam@gmail.com}
\author[P. Das]{Priya Das}
\address{Department of Mathematics, National Institute of Technology, Calicut, Kerala-673601, India}

\email{priya.math88@gmail.com}
\author[S ~Selvaraja]{S ~Selvaraja}
\address{Chennai Mathematical Institute, H1, SIPCOT IT Park, Siruseri, Kelambakkam, Chennai, INDIA - 603103.}
\email{selva.y2s@gmail.com, sselvaraja@cmi.ac.in}

\thanks{AMS Classification 2010. Primary: 13D02, 05E45, 05C70}
\keywords{Castelnuovo-Mumford regularity,  product of edge ideals, linear resolution}
\maketitle
\begin{abstract}
 Let $I$ and $J$ be edge ideals in  a polynomial ring  $R = \K[x_1,\ldots,x_n]$ with $I \subseteq J$. 
 In this paper, we obtain a general upper and lower bound for the  Castelnuovo-Mumford regularity 
 of $IJ$ in terms of certain invariants associated with $I$ and $J$. 
 Using these results, we explicitly compute the regularity of $IJ$ for several classes of edge ideals.
 In particular, we compute the regularity of $IJ$ when $J$ has linear resolution.
 Finally, we compute the precise expression for the regularity
 of $J_1 J_2\cdots J_d$, $d \in \{3,4\}$, 
 where $J_1,\ldots,J_d$ are edge ideals,
 $J_1 \subseteq J_2 \subseteq \cdots \subseteq J_d$ and 
 $J_d$ is the edge ideal of a complete graph.
\end{abstract}

\section{Introduction}
  Let $M$ be a finitely generated graded module over $R = \K[x_1,\ldots,x_n]$ where
$\K$ is a field. 
The Castelnuovo-Mumford regularity (or simply, regularity) of $M$, denoted by $\reg(M)$, is defined 
to be the least integer $i$ so that, for every $j$, the $j^{\text{th}}$ syzygy of $M$
is generated in degrees $\leq i+j$.
Regularity  is an important invariant
in commutative algebra and algebraic geometry that measures the computational
complexity of ideals, modules, and sheaves. 
In this paper, we study bounds on the regularity of product of ideals in a 
 polynomial ring. 
 
 The regularity of products of ideals was studied first by Conca and Herzog \cite{CH03}.  
 They studied whether for homogeneous ideal $I$ and finitely 
 generated graded module $M$ over $R$, one has 
 $\reg(IM) \leq \reg(I) + \reg(M).$ 
 This question is essentially a generalization of the simple fact that the highest degree of a 
 generator of the product $IM$ is bounded above by the sum of the highest degree of a generator 
 of $M$ and the highest degree of a generator of $I$. The answer to this question is negative 
 in general. There are several examples already  known with $M=I$ such that 
 $\reg(I^2)>2\reg(I)$,
 see Sturmfels \cite{Stu00}.  
 They found some special classes of ideal $I$ and module $M$ such that $\reg(IM) \leq \reg(I)+\reg(M)$.
 In particular, they showed that  if $I$ is a homogeneous ideal in a 
 polynomial  ring $R$ with $\dim(R/I)\leq 1$, then 
 $\reg(IM) \leq \reg(I)+ \reg(M)$ 
 for any finitely generated module 
 $M$ over $R$.  

In case $M$ is also a homogeneous ideal, the situation becomes particularly interesting. 
For example, Sidman proved that if $\dim(R/(I+J)) \leq 1$,
then the regularity of $IJ$ is bounded above by $\reg(I) + \reg(J)$, \cite{Sid02}.
Also, she proved that if two ideals of $R$, say $I$ and $J$, define schemes whose 
intersection is a finite set of points,
then $\reg(IJ)\leq \reg(I)+\reg(J)$. 
In \cite{CMT07}, Chardin, Minh and Trung proved that if 
$I$ and $J$ are monomial complete intersections, then $\reg(IJ) \leq \reg(I)+\reg(J)$. 
Cimpoea\c{s} proved that for two monomial ideals of Borel type $I,J$, $\reg(IJ) \leq \reg(I)+\reg(J)$,
\cite{Cim09}.
Caviglia in \cite{Cav07} and Eisenbud, Huneke and Ulrich in \cite{EHU06}
studied the more general problem of the regularity of tensor products and various $\Tor$ modules 
of $R/I$ and $R/J$.

 In this paper, we study the same problem for the case of edge ideals and seek for better bounds by 
  exploiting the combinatorics of the underlying graph. 
  Let $G$ be a finite simple graph without isolated vertices  on the vertex set $\{x_1,\ldots,x_n\}$ and $I(G) := (\{x_ix_j \mid
\{x_i, x_j\} \in E(G)\}) \subset R=\K[x_1,\ldots,x_n]$ be the \textit{edge ideal} corresponding to the graph $G$.
In general, computing the regularity of $I(G)$ is NP-hard  
(\cite[Corollary 23]{russ}).
Several recent papers have related the $\reg(I(G))$ with various
invariants of the graph $G$ (see \cite{BBH17} for
a survey in this direction). A primary inspiration for this paper is
Katzman's and Woodroofe's theorem from \cite{kat} and \cite{russ}. They showed that if $G$ is a graph, then 
\begin{equation}\label{ag_reg_chg}
\nu(G) + 1 \leq \reg(I(G)) \leq\cochord(G) + 1, 
\end{equation}
where $\nu(G)$ denotes the induced matching number of
$G$ (see Section \ref{pre} for the definition) and 
$\cochord(G)$ denotes the co-chordal cover number of $G$ (see Section \ref{pre} for the definition).
In this context, the natural question arises if 
$I$ and $J$ are edge ideals in $R$,
then what is the regularity of $IJ$? 
This question give rise to two directions of research. One direction is to obtain
the precise expression for $\reg(IJ)$ for particular classes of edge ideals. 
Another direction is to obtain upper
and lower bounds for $\reg(IJ)$ using combinatorial invariants associated to the graphs.
Therefore, one may ask if
$I$ and $J$ are edge ideals, then 
\begin{enumerate}
 \item[Q1] what are the lower and upper bounds for the regularity of $IJ$ using combinatorial 
invariants associated to the graphs?
\item[Q2] what is the precise expression for the regularity of $IJ$ 
for particular classes of graphs?
\end{enumerate}

This paper evolves around these two questions.

Computing the regularity of product of two edge ideals of graphs seems more challenging 
compared to the regularity of edge ideal of a graph. Even in the case of simple classes of graphs, 
the regularity of product of two edge ideals is not known. 
So, naturally one restricts the attention to important subclasses.
We are therefore interested in families of edge ideals $I$ and $J$ with $I \subseteq J$.

First, we  prove the lower bound for the regularity of product of more than two edge ideals. More precisely,
let $J_i=I(G_i)$ be the edge ideal of $G_i$ and $J_1 \subseteq \cdots \subseteq J_d$ 
for all $1 \leq i \leq d$.
  Then $2d+\nu_{G_1 \cdots G_d}-1 \leq \reg(J_1 \cdots J_d)$,
  where $\nu_{G_1 \cdots G_d}$ denotes the joint induced matching number of $G_i$ 
  (see Section \ref{pre} for the definition) for all
  $1 \leq i \leq d$
  (Theorem \ref{main-lower}). 
We prove an upper bound for the regularity of product of two edge
  ideals in terms of 
co-chordal cover numbers. We prove that if
$G$ is a graph and $H$ is a subgraph of $G$ with $I=I(H)$ and $J=I(G)$, then
 $\reg(IJ) \leq \max \{\cochord(G)+3,~\reg(I)\}.$
 In particular,
$\reg(IJ) \leq \max \{\cochord(G)+3,~\cochord(H)+1\}$ (Theorem \ref{main}).
The above bound is inspired by the general upper bound for the regularity of powers
of edge ideals given in \cite[Theorem 3.6]{jayanthan} and \cite[Theorem 4.4]{JS21}.
Theorem \ref{main} has a number of interesting consequences. For example, 
Corollary \ref{mat-bound}, says that if $H$ is any subgraph of $G$,
then $\reg(IJ) \leq \ma(G)+3$ where $\ma(G)$ denotes  the matching number of $G$.
On the other hand, Corollary \ref{ind-eql}, says that 
if $H$ is an induced subgraph of $G$, then 
 $\nu(H)+3 \leq \reg(IJ) \leq \cochord(G)+3.$

 We then move on to compute the precise expression for the regularity of product of edge ideals.
 As  a consequence of the techniques that we have developed, we explicitly compute the regularity of $IJ$ 
when $J$ has linear resolution (Theorem \ref{regularity}). 
Next, we study the regularity of product of more than two edge ideals.
We compute the precise expression for $\reg(J_1\cdots J_d)$ when $J_1 \subseteq \cdots \subseteq J_d$, $d \in \{3,4\}$ and 
$J_d$ is the edge ideal of complete graph (Theorem \ref{prd-ideals}).
We now use Theorem \ref{main} and Theorem \ref{prd-ideals} to get an upper bound for the regularity of
$J_1 \cdots J_d$ in terms of co-chordal cover numbers (Corollary \ref{prd-us}).
As an immediate consequence of above results, we give sufficient conditions for product of edge ideals
to have linear resolutions (Corollary \ref{se-cl}, Corollary \ref{ls-pr}).

Our paper is organized as follows. In Section \ref{pre}, we collect the necessary notions, 
  terminologies and some results that are used subsequently.
  We prove, in Section \ref{tech}, several
technical lemmas which are needed for the proof of our main results which appear in
Sections \ref{2-edgeideals} and \ref{precise}.

\section{Preliminaries}\label{pre}
In this section, we set up the basic definitions and notation needed for the main results. Let $G$ be a finite simple graph with vertex set $V(G)$
and edge set $E(G)$. 
 A subgraph $L \subseteq G$  is called \textit{induced} if $\{u,v\}$ is an edge of $L$ 
if and only if $u$ and $v$ are vertices of $L$ and $\{u,v\}$ is an edge of $G$.
For $\{u_1,\ldots,u_r\}  \subseteq V(G)$, let $N_G(u_1,\ldots,u_r) = \{v \in V (G)\mid \{u_i, v\} \in E(G)~ 
\text{for some $1 \leq i \leq r$}\}$
and $N_G[u_1,\ldots,u_r]= N_G(u_1,\ldots,u_r) \cup \{u_1,\ldots,u_r\}$. 
For $U \subseteq V(G)$, we denote by $G \setminus U$ 
the induced subgraph of $G$ on the vertex set $V(G) \setminus U$.
Let $C_k$ denote the cycle on $k$ vertices.

Let $G$ be a graph. We say $2$ non-adjacent edges
  $\{f_1,f_2\}$ form an $2K_2$ in $G$ if $G$ does not have an edge with one 
endpoint in $f_1$ and the other in $f_2$. A graph without $2K_2$ is called $2K_2$-free
also called \textit{gap-free} graph.


A \textit{matching} in a graph $G$ is a subgraph consisting of pairwise disjoint edges.
The \textit{matching number} of $G$, denoted by $\ma(G)$, is the
maximum cardinality among matchings of $G$. 
If the subgraph is an induced subgraph, the matching is an \textit{induced matching}.
The largest size of an induced matching in $G$ is called its \textit{induced 
matching number} and denoted by $\nu(G)$.  
The \textit{complement} of a graph $G$, denoted by $G^c$, is the graph on the same
vertex set in which $\{u,v\}$ is an edge of $G^c$ if and only if it is not an edge of $G$.
A graph $G$ is \textit{chordal} if every induced
cycle in $G$ has length $3$, and is co-chordal if $G^c$ is chordal.
The \textit{co-chordal cover number}, denoted $\cochord(G)$, is the
minimum number $n$ such that there exist co-chordal subgraphs
$H_1,\ldots, H_n$ of $G$ with $E(G) = \bigcup_{i=1}^n E(H_i)$.

Let $G_i$ is a graph, for all $1 \leq i \leq d$, and $G_i$ be a subgraph
of $G_{i+1}$ for all $1 \leq i \leq d-1$.
The largest size of an induced matching in $G_i$ for all $1 \leq i \leq d$ is called  the \textit{joint induced 
matching number} and denoted by $\nu_{G_1 \cdots G_d}$.
Note that if $G_i$ is an induced subgraph of $G_{i+1}$ for all
$1 \leq i \leq d-1$, then $\nu_{G_1 \cdots G_d}=\nu(G_1)$.

\begin{example}
 Let $G$ be the graph as shown in figure. 
 Then $\{ \{x_1,x_2\}$, $\{x_3,x_4\}$, $\{x_5,x_6\}$, $\{x_7,x_8\}\}$
 forms a matching of $G$, but not an induced matching. 
 The set 
 $\{ \{x_1,x_2\}, \{x_4,x_5\}\}$ forms an induced matching.
 Then $\nu(G) \geq 2$. It is not hard to verify that $\nu(G)=2$.
 Let $H$ be a subgraph of $G$ with $E(H)=\{ \{x_1,x_2\}, \{x_3,x_4\}\}$.
  Since $H$ is a disjoint union two edges, $\nu(H)=2$. The set $\{\{x_1,x_2\}\}$ forms an 
  induced matching of $G$ and $H$. Then $\nu_{HG} \geq 1$. Since the set 
  $\{\{x_1,x_2\},\{x_3,x_4\}\}$ forms an induced matching of $H$ but not in $G$, 
  $\nu_{HG} =1$.

 \begin{minipage}{\linewidth}
\begin{minipage}{0.23\linewidth}
\begin{figure}[H]

\begin{tikzpicture}[scale=.64]
\clip(1.42,1.6) rectangle (13.66,5.6);
\draw [line width=1.pt] (4.,5.)-- (5.,5.);
\draw [line width=1.pt] (4.,5.)-- (3.,4.);
\draw [line width=1.pt] (3.,4.)-- (3.,3.);
\draw [line width=1.pt] (3.,3.)-- (4.,2.);
\draw [line width=1.pt] (4.,2.)-- (5.,2.);
\draw [line width=1.pt] (5.96,2.86)-- (5.,2.);
\draw [line width=1.pt] (6.,4.)-- (5.96,2.86);
\draw [line width=1.pt] (5.,5.)-- (6.,4.);
\begin{scriptsize}
\draw [fill=black] (4.,5.) circle (2.0pt);
\draw[color=black] (4.19,5.38) node {$x_8$};
\draw [fill=black] (5.,5.) circle (2.0pt);
\draw[color=black] (5.19,5.38) node {$x_1$};
\draw [fill=black] (3.,4.) circle (2.0pt);
\draw[color=black] (2.57,4.14) node {$x_7$};
\draw [fill=black] (3.,3.) circle (2.0pt);
\draw[color=black] (2.53,3.06) node {$x_6$};
\draw [fill=black] (4.,2.) circle (2.0pt);
\draw[color=black] (3.95,1.76) node {$x_5$};
\draw [fill=black] (5.,2.) circle (2.0pt);
\draw[color=black] (5.35,2.0) node {$x_4$};
\draw [fill=black] (5.96,2.86) circle (2.0pt);
\draw[color=black] (6.35,3.04) node {$x_3$};
\draw [fill=black] (6.,4.) circle (2.0pt);
\draw[color=black] (6.19,4.38) node {$x_2$};
\end{scriptsize}
\end{tikzpicture}
\end{figure}
\end{minipage}
\begin{minipage}{0.75\linewidth}
 Let $H_1$, $H_2$ and $H_3$ be the subgraphs of $G$ with 
$E(H_1)=\{\{x_1,x_2\},\{x_2,x_3\},\{x_3,x_4\} \}$,
$E(H_2)=\{\{x_4,x_5\},\{x_5,x_6\},\{x_6,x_7\} \}$ and
$E(H_3)=\{\{x_7,x_8\},\{x_8,x_1\}\}$ respectively.
We can seen that $H_1$, $H_2$ and $H_3$ are
co-chordal subgraphs of $G$ and $E(G)=\bigcup_{i=1}^3E(H_i)$.
Therefore, $\cochord(G) \leq 3$. It is also  
not hard to verify that $\cochord(G)=3$. 
\end{minipage}
\end{minipage}
\end{example}

Polarization is a process to obtain a squarefree monomial ideal from a given monomial ideal. 
\begin{definition}\label{pol_def}
 Let $M=x_1^{a_1}\cdots x_n^{a_n}$ be a monomial in  $R=\K[x_1,\dots,x_n]$. 
Then we define the squarefree monomial $P(M)$ ({\it polarization} of $M$) as 
$$P(M)=x_{11}\cdots x_{1a_1}x_{21}\cdots x_{2a_2}\cdots x_{n1}\cdots x_{na_n}$$ in the polynomial ring 
$R_1=\K[x_{ij} \mid 1\leq i\leq n,1\leq j\leq a_i]$.
If $I=(M_1,\dots,M_q)$ is an ideal in $R$, then the polarization of
$I$, denoted by $\widetilde{I}$, is defined as $\widetilde{I}=(P(M_1),\dots,P(M_q))$.
\end{definition}

Let $M$ be a graded $R = \K[x_1,\ldots,x_n]$ module. For non-negative
integers $i, j$, let $\beta_{i,j}(M)$ denote the $(i,j)$-th graded Betti
number of $M$.
In this paper, we repeatedly use one of the important properties of the
polarization, namely:
\begin{corollary} \cite[Corollary 1.6.3(a)]{Herzog'sBook}\label{pol_reg} 
Let $I \subseteq R=\K[x_1,\ldots,x_n]$ be a monomial ideal. 
 If $\widetilde{I} \subseteq \widetilde{R}$ is a polarization of $I$, then for 
 all $i,j$, $\beta_{i,j}(R/I)=\beta_{i,j}(\widetilde{R} / \widetilde {I})$. 
 In particular, $\reg(R/I)=\reg (\widetilde R/\widetilde I)$.
\end{corollary}

\section{Technical lemmas}\label{tech}

In this section we prove several technical results concerning the graph associated with
$\widetilde{(IJ:ab)}$, for any $ab \in I$, where $I$ and $J$ are edge ideals and $I \subseteq J$.
We first fix the set-up that we consider throughout this paper.
\begin{setup}\label{setup}
Let $G$ be a graph and $H$ be a subgraph of $G$. Set $I=I(H)$ and $J=I(G)$.
 \end{setup}
For a monomial ideal $K$, let $\mathcal{G}(K)$ denote the minimal generating set of $K$.
For a monomial $m \in R = \K[x_1,\ldots,x_n]$, support of $m$ is the set of variables appearing in
$m$ and is denoted by $\supp(m)$, i.e., $\supp(m) = \{x_i \mid x_i \text{ divides } m \}$.

The following result is being used repeatedly in this paper:

\begin{lemma} \label{stu-lemma}
Let $I$ and $J$ be as in Set-up \ref{setup}.
  Then the colon ideal
 $(IJ:ab)$ is a generated by
 quadratic monomial ideal for any $ab \in I$. 
 More precisely,
 \[
  (IJ:ab)=J+ K_1 + K_2,
 \]
where 
$K_1=(pq \mid p \in N_G(a) \text{ and } q \in N_H(b))$ and 
$K_2=(rs \mid r \in N_H(a) \text{ and } s\in N_G(b))$.
\end{lemma}
\begin{proof}


Let $m \in \mathcal{G}((IJ:ab))$. 
By degree consideration $m$ can not have degree 1. Suppose $\deg(m) \geq 3$.
Then there exists $e \in \mathcal{G}(I)$ and 
$f \in \mathcal{G}(J)$ such that $ef\mid mab$. Since $m$ is a minimal monomial generated of 
$(IJ:ab)$, there does not 
exist $m'$,  $m' \neq m$ and $m' \mid m$ such that $ef\mid m'ab$.  
If there exist $g \in \mathcal{G}(J)$ such that $g\mid m$, then for minimality 
of $m$ and $g \in (IJ:ab)$ both implies $g=m$.
This is a contradiction to $\deg(m) \geq 3$.
Therefore, $\deg(m)=2$. We assume that $g \nmid m$ for any $g \in \mathcal{G}(J)$.
Then $e \nmid ab$. Let $e=ax$, where $x\mid m$. Therefore, 
$xf\mid mb$. 
If $f=by$, where $y\mid (\frac{m}{x})$, then $xy\mid m$. Hence, 
by minimality of $m$, $m$ is a quadratic monomial. 
Similarly, for $e=bx$ we can prove in a similar manner.

 Clearly, $J+K_1+K_2 \subseteq (IJ:ab)$. We need to prove the reverse inclusion.
 Let $uv \in \mathcal{G}(IJ:ab)$. If $uv \in J$, then we are done. Suppose $uv \notin J$.
 Since $uvab \in IJ$, we have the following cases $ua \in I $ and $vb \in J$ or 
 $ua \in J $ and $vb \in I$ or $ub \in I $ and $va \in J$ or $ub \in J $ and $va \in I$.
 In all cases, one can show that either $uv \in K_1$ or $uv \in K_2$. Therefore,
 $(IJ:ab)=J+ K_1 + K_2$.
\end{proof}

Let $I$ and $J$ be as in Set-up \ref{setup}.
 Then 
for any $ab \in I$, $\widetilde{(IJ:ab)}$ is a quadratic squarefree monomial ideal, by 
Lemma \ref{stu-lemma}. There exists a graph $\PP$ associated to $\widetilde{(IJ:ab)}$. 
Suppose $xy$ is a minimal generator of $(IJ:ab)$.
If $x \neq y$, then set $\{[x,y]\}=\{x,y\}$  and $\{[x,y]\}$ is
an edge of $\PP$.
If $x=y$, then set $\{[x,y]\}=\{x,z_{x}\}$, where $z_x$ is a new vertex of $\PP$, and $\{[x,y]\}$ is
an edge of $\PP$. 
Observe that
$G$ is a subgraph of $\PP$ i.e.,
$V(G) \subseteq V(\PP)$ and $E(G) \subseteq E(\PP)$. 
For example, let $I=(x_4x_5,x_5x_6,x_4x_6)$ and 
$J=(x_1x_2,x_2x_3,x_3x_4,x_4x_5,x_5x_6,x_1x_6,x_4x_6)$. Then $(IJ:x_4x_5)=J+(x_6^2,x_3x_6) \subset 
\K[x_1,\ldots,x_6]$ and $\widetilde{(IJ:x_4x_5)}=J+(x_6z_{x_6},x_3x_6) 
\subset \K[x_1,\ldots,x_6,z_{x_6}]$. Let $\PP$ be the graph associated to $\widetilde{(IJ:x_4x_5)}$.
Then $V(\PP)=V(G) \cup \{z_{x_6}\}$ and $E(\PP)=E(G) \cup \{\{[x_6,x_6]\}, \{x_3,x_6\}\}$.

The following is a useful result on co-chordal graphs that allow us to assume 
  certain order on their edges.

\begin{lemma}\cite[Lemma 1 and Theorem 2]{benzaken}\label{main-lemmatech}
 Let $G$ be a graph and $E(G)=\{e_1\ldots,,e_t\}$. Then $G$ is a co-chordal graph if and only  if
 there is an ordering of edges of $G$,
 $e_{i_1}< \cdots <e_{i_t}$, such that for $1 \leq r \leq t$,
 $(V(G),\{e_{i_1},\ldots,e_{i_r}\})$ has no induced subgraph isomorphic to
 $2K_2$.
\end{lemma}

One of the key ingredients in the proof of the main results is a new graph, $\PP$, obtained
from the  given graphs $G$ and $H$ as in Lemma \ref{stu-lemma}. Our main aim in this section is to get an
upper bound for the co-chordal cover number of $\PP$ 
 which in turn will help
us in bounding $\reg(IJ)$. For this purpose, we need to understand the structure of
the graph $\PP$ in more detail. First we discuss the procedure to get a new graph from the given 
co-chordal subgraph of $G$.
 
 \begin{dis}\label{dis}
Let $I$ and $J$ be as in Set-up \ref{setup}.
Let $\PP$ be the graph 
associated to 
 $\widetilde{(IJ:ab)}$ for any $ab\in I$.
  Suppose $\cochord(G)=\widetilde{n}$. Then there exist co-chordal subgraphs 
  $H_1,\ldots,H_{\widetilde{n}}$ of $G$ 
 such that $E(G)= \bigcup \limits_{i=1}^{\widetilde{n}} E(H_i)$. 
 Let $N_H(a) \setminus b=\{a_1,\ldots,a_{\alpha'}\}$,
 $N_G(a)\setminus b=\{a_1,\ldots, a_{\alpha'},a_{\alpha'+1}, \ldots,a_\alpha\}$,
  $N_H(b) \setminus 
  a=\{b_1,\ldots,b_{\beta'}\}$ and $N_G(b)\setminus a=\{b_1,\ldots,b_{\beta'}, b_{\beta'+1},\ldots,
  b_\beta\}$.
  Set 
  \begin{align*}
    \mathcal{N}(G)_a&=\{ \{a,a_i\} \in E(G) \mid 1 \leq i \leq \alpha\} \text{ and }
    \mathcal{N}(G)_b=\{ \{b,b_i\} \in E(G) \mid 1 \leq i \leq \beta\}.
  \end{align*} 
  Note that if $c \in (N_G(a) \setminus b) \cap (N_G(b) \setminus a)$, then
$\{a,c\} \in \mathcal{N}(G)_a$ and $\{b,c\} \in \mathcal{N}(G)_b$.
Since $H_m$ is co-chordal for all $1 \leq m \leq \widetilde{n}$, by Lemma \ref{main-lemmatech},
there is an ordering of edges of $H_m$, 
\begin{align}\label{ori-ord}
 f_1 < \cdots < f_{t_m},
\end{align}
 such
that for $1\leq r \leq t_m, ~ (V(H_m), \{f_1,\ldots,f_r\})$ has no
induced subgraph isomorphic to $2K_2$.

We now define a procedure to add certain 
edges to $H_m$, to get a new graph $H_m'$ in the following steps:

\begin{enumerate}
 \item[\textsc{Step 1:}] 
 If $f_k=\{a,b\} $ for some $1 \leq k \leq t_m$, 
then we extend the ordered sequence of edges $f_i$s by entering some new edges in the 
following order:
  \begin{align*}
   &\cdots <f_k <\{a,a_1\}< \cdots< \{a,a_{\alpha'}\}<\{b,b_1\}< \cdots< 
   \{b,b_{\beta'}\}<  \{[a_{1},b_1]\} < \cdots 
   < \{[a_1,b_{\beta'}]\}\\ &<
   \{[a_2,b_1]\} < \cdots < \{[a_2,b_{\beta'}]\} < \cdots
   < \{[a_{\alpha'},b_{1}]\}<\cdots <\{[a_{\alpha'},b_{\beta'}]\}<f_{k+1}< \cdots
  \end{align*}
  \item[\textsc{Step 2:}] (i) If for $1 \leq \mu \leq \alpha$, $f_{k_1}=\{a,a_{\mu}\} \in \mathcal{N}(G)_a$ 
  for some $1 \leq k_1 \leq t_m$, 
  then extend the ordered sequence of edges obtained in \textsc{Step 1} by adding some new edges in the following order: 
$$ \cdots <f_{k_1} <\{[a_{\mu},b_1]\} < \cdots < \{[a_{\mu},b_{\beta'}]\}<f_{k_{1}+1}< \cdots$$ 
\vskip 1mm \noindent
(ii) If for $1 \leq \mu \leq \beta$, $f_{k_2}=\{b,b_{\mu}\} \in \mathcal{N}(G)_b$ for some $1 \leq k_2 \leq t_m$, 
then extend the ordered sequence obtained from \textsc{Step 2}(i) by adding new edges in the following order:
  $$\cdots <f_{k_2} <\{[b_{\mu},a_1]\} < \cdots < \{[b_{\mu},a_{\alpha'}]\}<f_{k_2+1}< \cdots$$     
  otherwise do not do anything.
\item[\textsc{Step 3:}] 
 After applying \textsc{Step 1} and \textsc{Step 2}, we get that 
 the ordered sequence
  \begin{align}\label{ord1}
    g_1< \cdots < g_{t_{m'}}
  \end{align}
of whose elements
are edges of  $H_m'$. Note that 
 these steps give us an ordered sequence of edges where some edges may appear more than once
 i.e., $g_i$ may be equal to $g_j$ for some $1 \leq i, j \leq t_{m'}$
 in $\eqref{ord1}$.
For each edge we keep the first appearance and delete the subsequent ones 
 in $\eqref{ord1}$
 to get a  non repeating ordered sequence 
 \begin{align*}\label{f-ord}
   \mathfrak{g}_1< \cdots < \mathfrak{g}_{t_{m_1}}
  \end{align*}
 of edges of $H_m'$ where $t_{m_1} \leq t_{m'}$. 
 \end{enumerate}
 \end{dis}
 First note that $\{g_1,\ldots,g_{t_{m'}}\}=\{\mathfrak{g}_1,\ldots,\mathfrak{g}_{t_{m_1}}\}$. 
For the convenience of the readers, we give an example 
 in below, describing the ordering that are defined  above.

 \begin{example}
  Let $G$ and $H$ be the graphs as shown in the figure below.
Set $I=I(H)$, $J=I(G)$, $a=x_7$ and $b=x_6$. 
 Let $\PP$ be the graph associated to $\widetilde{(IJ:ab)}$. 
 Note that $N_G(x_6)\setminus \{x_7\}=\{x_5,x_8,x_{10}\}$,
 $N_G(x_7)\setminus \{x_6\}=\{x_2, x_4,x_8\}$,
 $N_H(x_6)\setminus \{x_7\}=\{x_5,x_8\}$ and 
 $N_H(x_7)\setminus \{x_6\}=\{x_4,x_8\}$. 
Let $H_1$, $H_2$ and $H_3$ be  co-chordal
 subgraphs  of $G$ such that 
 $E(G)=\bigcup_{i=1}^3 E(H_i)$. Therefore $\cochord(G)=3$.

\begin{figure}[H]
\begin{tikzpicture}[scale=0.25]
\draw (4,12)-- (8,12);
\draw (2,10)-- (4,12);
\draw (1.9,7.69)-- (2,10);
\draw (8,12)-- (10,10);
\draw (10,10)-- (10,8);
\draw (2,6)-- (1.9,7.69);
\draw (10,6)-- (10,8);
\draw (4,4)-- (8,4);
\draw (8,4)-- (10,6);
\draw (2,6)-- (4,4);
\draw (4,4)-- (8,12);
\draw (4,4)-- (10,8);
\draw (2,10)-- (8,4);
\draw (2,6)-- (8,4);
\draw (16,4)-- (20,4);
\draw (20,4)-- (22,6);
\draw (22,6)-- (22,8);
\draw (16,4)-- (14,6);
\draw (14,6)-- (14,8);
\draw (14,6)-- (20,4);
\draw (16,4)-- (22,8);
\draw (30,4)-- (34,4);
\draw (30,4)-- (28,6);
\draw (28,8)-- (28,6);
\draw (28,10)-- (28,8);
\draw (28,10)-- (30,12);
\draw (30,12)-- (34,12);
\draw (34,12)-- (36,10);
\draw (36,8)-- (36,10);
\draw (36,6)-- (36,8);
\draw (36,6)-- (34,4);
\draw (30,4)-- (34,12);
\draw (34,4)-- (28,6);
\draw (28,10)-- (34,4);
\draw (30,4)-- (36,8);
\draw [line width=1.2pt,dash pattern=on 2pt off 2pt] (28,6)-- (34,12);
\draw [line width=1.2pt,dash pattern=on 2pt off 2pt] (34,12)-- (36,6);
\draw [shift={(28,8)},line width=1.2pt,dash pattern=on 2pt off 2pt]  plot[domain=1.57:4.71,variable=\t]({1*2*cos(\t r)+0*2*sin(\t r)},{0*2*cos(\t r)+1*2*sin(\t r)});
\draw [line width=1.2pt,dash pattern=on 2pt off 2pt] (28,10)-- (36,8);
\draw [line width=1.2pt,dash pattern=on 2pt off 2pt] (26,4)-- (28,6);
\draw [line width=1.2pt,dash pattern=on 2pt off 2pt] (28,6)-- (36,8);
\draw [line width=1.2pt,dash pattern=on 2pt off 2pt] (36,6)-- (28,6);
\draw (5.36,3.4) node[anchor=north west] {$G$};
\draw (17.71,3.58) node[anchor=north west] {$H$};
\draw (31.81,3.14) node[anchor=north west] {$\mathcal{P}$};
\begin{scriptsize}
\fill [color=black] (4,12) circle (3.5pt);
\draw[color=black] (4.36,12.56) node {$x_1$};
\fill [color=black] (8,12) circle (3.5pt);
\draw[color=black] (8.34,12.56) node {$x_2$};
\fill [color=black] (2,10) circle (3.5pt);
\draw[color=black] (1.2,9.97) node {$x_{10}$};
\fill [color=black] (1.9,7.69) circle (3.5pt);
\draw[color=black] (1.2,7.87) node {$x_9$};
\fill [color=black] (10,10) circle (3.5pt);
\draw[color=black] (10.36,10.58) node {$x_3$};
\fill [color=black] (10,8) circle (3.5pt);
\draw[color=black] (10.71,8.57) node {$x_4$};
\fill [color=black] (2,6) circle (3.5pt);
\draw[color=black] (1.2,6.21) node {$x_8$};
\fill [color=black] (10,6) circle (3.5pt);
\draw[color=black] (10.76,6.56) node {$x_5$};
\fill [color=black] (4,4) circle (3.5pt);
\draw[color=black] (3.83,3.53) node {$x_7$};
\fill [color=black] (8,4) circle (3.5pt);
\draw[color=black] (8.03,3.45) node {$x_6$};
\fill [color=black] (16,4) circle (3.5pt);
\draw[color=black] (15.79,3.45) node {$x_7$};
\fill [color=black] (20,4) circle (3.5pt);
\draw[color=black] (20.08,3.53) node {$x_6$};
\fill [color=black] (22,6) circle (3.5pt);
\draw[color=black] (22.7,6.56) node {$x_5$};
\fill [color=black] (22,8) circle (3.5pt);
\draw[color=black] (22.35,8.57) node {$x_4$};
\fill [color=black] (14,6) circle (3.5pt);
\draw[color=black] (13.33,6.07) node {$x_8$};
\fill [color=black] (14,8) circle (3.5pt);
\draw[color=black] (14.34,8.57) node {$x_9$};
\fill [color=black] (30,4) circle (3.5pt);
\draw[color=black] (30.06,3.49) node {$x_7$};
\fill [color=black] (34,4) circle (3.5pt);
\draw[color=black] (34.13,3.42) node {$x_6$};
\fill [color=black] (28,6) circle (3.5pt);
\draw[color=black] (27.96,5.26) node {$x_8$};
\fill [color=black] (28,8) circle (3.5pt);
\draw[color=black] (27.37,8.09) node {$x_9$};
\fill [color=black] (28,10) circle (3.5pt);
\draw[color=black] (27.61,10.58) node {$x_{10}$};
\fill [color=black] (30,12) circle (3.5pt);
\draw[color=black] (30.37,12.56) node {$x_1$};
\fill [color=black] (34,12) circle (3.5pt);
\draw[color=black] (34.4,12.56) node {$x_2$};
\fill [color=black] (36,10) circle (3.5pt);
\draw[color=black] (36.37,10.58) node {$x_3$};
\fill [color=black] (36,8) circle (3.5pt);
\draw[color=black] (36.67,8.57) node {$x_4$};
\fill [color=black] (36,6) circle (3.5pt);
\draw[color=black] (36.67,6.56) node {$x_5$};
\fill [color=black] (26,4) circle (1.5pt);
\draw[color=black] (27.35,3.88) node {$z_{x_8}$};
\end{scriptsize}
\end{tikzpicture}
\end{figure}

Let $f_1=\{x_1,x_2\}<f_2=\{x_2,x_7\}<f_3=\{x_2,x_3\}<f_4=\{x_3,x_4\}$
be the ordering of the edges of
 $H_1$ such that for $1 \leq i \leq 4$, $(V(H_1), \{f_1,\ldots,f_i\})$ has
  no induced subgraph isomorphic to $2K_2$.
Note that $f_{i}\neq \{a,b\}$ for all $1 \leq i \leq 4$. Therefore there is no change in the ordered sequence
of edges $f_i$'s. Since $f_2 \in \mathcal{N}(G)_a$, by \textsc{Step 2}(i),
\[
 f_1<f_2<\{x_2,x_5\}<\{x_2,x_8\}<f_3<f_4.
\]
Also note that $f_i \notin \mathcal{N}(G)_b$ for all $1 \leq i \leq 4$.
Since no repeated edge in the above ordering, by \textsc{Step 3},
 $H_1'$ is the graph with edge set 
$E(H_1) \cup \{\{x_2,x_5\}, \{x_2,x_8\}\}$ and whose edges appearing in the above ordered sequence.

\begin{figure}[H]
\begin{tikzpicture}[scale=.3]
\draw (4,12)-- (6,12);
\draw (6,10)-- (6,12);
\draw (6,8)-- (6,10);
\draw (4,10)-- (6,12);
\draw (10,12)-- (8,10);
\draw (8,10)-- (8,8);
\draw (10,8)-- (8,8);
\draw (10,10)-- (8,10);
\draw (14,8)-- (16,8);
\draw (12,10)-- (14,8);
\draw (18,10)-- (16,8);
\draw (18,10)-- (18,12);
\draw (18,12)-- (14,8);
\draw (12,10)-- (16,8);
\draw (20,12)-- (22,12);
\draw (20,10)-- (22,12);
\draw (22,12)-- (22,10);
\draw (22,8)-- (22,10);
\draw [line width=1.2pt,dash pattern=on 2pt off 2pt] (22,12)-- (22,14);
\draw [line width=1.2pt,dash pattern=on 2pt off 2pt] (22,12)-- (24,12);
\draw (26,10)-- (28,12);
\draw (26,10)-- (28,10);
\draw (26,10)-- (26,8);
\draw (28,8)-- (26,8);
\draw [line width=1.2pt,dash pattern=on 2pt off 2pt] (26,10)-- (28,8);
\draw [line width=1.2pt,dash pattern=on 2pt off 2pt] (26,12)-- (26,10);
\draw (32,8)-- (34,8);
\draw (30,10)-- (32,8);
\draw (34,8)-- (36,10);
\draw (36,12)-- (36,10);
\draw (32,8)-- (36,12);
\draw (30,10)-- (34,8);
\draw [line width=1.2pt,dash pattern=on 2pt off 2pt] (30,10)-- (36,10);
\draw [line width=1.2pt,dash pattern=on 2pt off 2pt] (30,10)-- (36,12);
\draw [line width=1.2pt,dash pattern=on 2pt off 2pt] (30,12)-- (30,10);
\draw (4.26,7.56) node[anchor=north west] {$H_1$};
\draw (8.2,7.3) node[anchor=north west] {$H_2$};
\draw (14.09,7.12) node[anchor=north west] {$H_3$};
\draw (20.6,7.68) node[anchor=north west] {$H_1'$};
\draw (26.52,7.68) node[anchor=north west] {$H_2'$};
\draw (32.37,7.56) node[anchor=north west] {$H_3'$};
\begin{scriptsize}
\fill [color=black] (4,12) circle (3.5pt);
\draw[color=black] (4.3,12.51) node {$x_1$};
\fill [color=black] (6,12) circle (3.5pt);
\draw[color=black] (6.29,12.51) node {$x_2$};
\fill [color=black] (6,10) circle (3.5pt);
\draw[color=black] (5.39,10.12) node {$x_3$};
\fill [color=black] (6,8) circle (3.5pt);
\draw[color=black] (5.39,8.5) node {$x_4$};
\fill [color=black] (4,10) circle (3.5pt);
\draw[color=black] (3.49,10.21) node {$x_7$};
\fill [color=black] (10,12) circle (3.5pt);
\draw[color=black] (10.27,12.51) node {$x_{1}$};
\fill [color=black] (8,10) circle (3.5pt);
\draw[color=black] (7.66,10.52) node {$x_{10}$};
\fill [color=black] (8,8) circle (3.5pt);
\draw[color=black] (7.93,7.56) node {$x_9$};
\fill [color=black] (10,8) circle (3.5pt);
\draw[color=black] (9.99,7.56) node {$x_8$};
\fill [color=black] (10,10) circle (3.5pt);
\draw[color=black] (10.19,10.52) node {$x_6$};
\fill [color=black] (14,8) circle (3.5pt);
\draw[color=black] (13.89,7.52) node {$x_7$};
\fill [color=black] (16,8) circle (3.5pt);
\draw[color=black] (16.23,7.48) node {$x_6$};
\fill [color=black] (12,10) circle (3.5pt);
\draw[color=black] (12.33,10.52) node {$x_8$};
\fill [color=black] (18,10) circle (3.5pt);
\draw[color=black] (18.8,10.22) node {$x_5$};
\fill [color=black] (18,12) circle (3.5pt);
\draw[color=black] (18.3,12.51) node {$x_4$};
\fill [color=black] (20,12) circle (3.5pt);
\draw[color=black] (20.32,12.51) node {$x_1$};
\fill [color=black] (22,12) circle (3.5pt);
\draw[color=black] (22.71,12.51) node {$x_2$};
\fill [color=black] (20,10) circle (3.5pt);
\draw[color=black] (19.93,10.64) node {$x_7$};
\fill [color=black] (22,10) circle (3.5pt);
\draw[color=black] (22.67,10.52) node {$x_3$};
\fill [color=black] (22,8) circle (3.5pt);
\draw[color=black] (22.67,8.5) node {$x_4$};
\fill [color=black] (22,14) circle (3.5pt);
\draw[color=black] (22.31,14.5) node {$x_8$};
\fill [color=black] (24,12) circle (3.5pt);
\draw[color=black] (24.3,12.51) node {$x_5$};
\fill [color=black] (26,10) circle (3.5pt);
\draw[color=black] (25.09,10.09) node {$x_{10}$};
\fill [color=black] (28,12) circle (3.5pt);
\draw[color=black] (28.31,12.51) node {$x_1$};
\fill [color=black] (28,10) circle (3.5pt);
\draw[color=black] (28.59,10.52) node {$x_6$};
\fill [color=black] (26,8) circle (3.5pt);
\draw[color=black] (25.25,8.07) node {$x_9$};
\fill [color=black] (28,8) circle (3.5pt);
\draw[color=black] (28.59,8.5) node {$x_8$};
\fill [color=black] (26,12) circle (3.5pt);
\draw[color=black] (26.6,12.51) node {$x_4$};
\fill [color=black] (32,8) circle (3.5pt);
\draw[color=black] (31.82,7.68) node {$x_7$};
\fill [color=black] (34,8) circle (3.5pt);
\draw[color=black] (34.82,7.83) node {$x_6$};
\fill [color=black] (30,10) circle (3.5pt);
\draw[color=black] (30.08,9.19) node {$x_8$};
\fill [color=black] (36,10) circle (3.5pt);
\draw[color=black] (36.58,10.52) node {$x_5$};
\fill [color=black] (36,12) circle (3.5pt);
\draw[color=black] (36.46,12.51) node {$x_4$};
\fill [color=black] (30,12) circle (3.5pt);
\draw[color=black] (30.46,12.51) node {$z_{x_8}$};
\end{scriptsize}
\end{tikzpicture}
\end{figure}

 Let  $f_1'=\{x_1,x_{10}\}<f'_2=\{x_6,x_{10}\}<f'_3=\{x_{9},x_{10}\}<f'_4=\{x_9,x_8\}$
be the ordering of the edges of
 $H_2$ such that for $1 \leq i \leq 4$, $(V(H_2), \{f_1',\ldots,f_i'\})$ has
  no induced subgraph isomorphic to $2K_2$. Note that
  $f_i' \neq \{a,b\}$ and $f_i' \notin \mathcal{N}(G)_a$ for all $1 \leq i \leq 4$.
   Since $f_2' \in \mathcal{N}(G)_{x_{6}}$, 
 by \textsc{Step 2}(ii) we have
 \[
  f_1'<f_2'<\{x_{10},x_4\}<\{x_{10},x_8\}<f_3'<f_4'.
 \]
In this case also no repeated edges. By \textsc{Step 3},
 $H_2'$ is the graph with edge set 
$E(H_2) \cup \{\{x_{10},x_4\}, \{x_{10},x_8\}\}$ and whose edges appearing in the above ordered sequence.

Let 
\begin{align*}
 f''_1=\{x_7,x_{6}\}<f''_2=\{x_6,x_{5}\}<f''_3=\{x_{5},x_{4}\}&<f''_4=\{x_4,x_7\}<\\
 &f''_5=\{x_7,x_8\}<f''_6=\{x_6,x_8\}
\end{align*}
be the ordering of the edges of
 $H_3$ such that for $1 \leq i \leq 6$, $(V(H_3), \{f_1'',\ldots,f_i''\})$ has
  no induced subgraph isomorphic to $2K_2$. Since $f_1'' =\{a,b\}$, by 
  \textsc{Step 1},
  \begin{align*}
   &f_1''=\{x_7,x_6\}<\{x_7,x_8\}<\{x_7,x_4\}<\{x_6,x_5\}
     <\{x_6,x_8\}<\{[x_8,x_8]\}<
    \\&\{x_8,x_5\}<\{x_4,x_5\} <\{x_4,x_8\}  
    <f_2''<f_3''<f_4''
    <f_5''<f_6''.
  \end{align*}
Since $f_4'', f_5'' \in \mathcal{N}(G)_a$, by \textsc{Step 2}(i), we have
\begin{align*}
   &f_1''=\{x_7,x_6\}<\{x_7,x_8\}<\{x_7,x_4\}<\{x_6,x_5\}
     <\{x_6,x_8\}<\{[x_8,x_8]\}<
    \\&\{x_8,x_5\}<\{x_4,x_5\} <\{x_4,x_8\}  
    <f_2''<f_3''<f_4''<\{x_4,x_5\}<\{x_4,x_8\}
    <f_5''\\&<\{[x_8,x_8]\}<\{x_8,x_6\}<f_6''.
\end{align*}   
  Since $f_2'', f_6'' \in \mathcal{N}(G)_b$, by \textsc{Step 2}(ii), we have
  \begin{align*}
   &f_1''=\{x_7,x_6\}<\{x_7,x_8\}<\{x_7,x_4\}<\{x_6,x_5\}
     <\{x_6,x_8\}<\{[x_8,x_8]\}< 
    \{x_8,x_5\}\\&<\{x_4,x_5\} <\{x_4,x_8\}  
    <f_2''=\{x_6,x_5\}<\{x_5,x_4\}<\{x_5,x_8\}<f_3''=\{x_5,x_4\}\\&<f_4''=\{x_4,x_7\}<\{x_4,x_5\}<\{x_4,x_8\}
    <f_5''=\{x_7,x_8\}<\{[x_8,x_8]\}<\{x_8,x_6\}\\&<f_6''=\{x_6,x_8\}<\{[x_8,x_8]\}<\{x_8,x_4\}.
\end{align*} 
Since the edges $\{x_6,x_5\}$, $\{x_5,x_4\}$, $\{x_5,x_8\}$, $\{x_4,x_7\}$,
$\{x_4,x_5\}$, $\{x_4,x_8\}$, $\{x_7,x_8\}$, $\{x_6,x_8\}$,
  $\{[x_8,x_8]\}$ are repeated in the above ordering,
  by \textsc{Step 3}, we have
  \begin{align*}
   &\{x_7,x_6\}<\{x_7,x_8\}<\{x_7,x_4\}<\{x_6,x_5\}
     <\{x_6,x_8\}<\{[x_8,x_8]\}< 
    \{x_8,x_5\}\\&<\{x_4,x_5\} <\{x_4,x_8\}  
\end{align*}
Therefore,   $H_3'$ is the graph with edge set $E(H_3')=E(H_3) \cup \{ \{x_8,x_4\},\{x_8,x_5\},\{x_8,z_{x_8}\}\}$
 and whose edges appearing in the above ordered sequence.

 \end{example}

The operations used in \textsc{Step 1} and \textsc{Step 2} above will also be used subsequently.
 So we fix notation to refer to them. Subsequently
we shall use these operations repeatedly. Instead of separately describe them on each occasion we shall
simply refer to the operation number.
 \begin{enumerate}
 
  \item[Op 1:]
  The operation used in \textsc{Step 2}(i) i.e.,
  if for $1 \leq \mu \leq \alpha$, $f_{k_1}=\{a,a_{\mu}\} \in \mathcal{N}(G)_a$ 
  for some $1 \leq k_1 \leq t_m$, 
  then  
$$ \cdots <f_{k_1} <\{[a_{\mu},b_1]\} < \cdots < \{[a_{\mu},b_{\beta'}]\}<f_{k_1+1}< \cdots$$
  
\item[Op 2:] The operation used in \textsc{Step 2}(ii) i.e., if for $1 \leq \mu \leq \beta$, $f_{k_2}=\{b,b_{\mu}\}
\in \mathcal{N}(G)_b$ for some $1 \leq k_2 \leq t_m$, 
then 
  $$\cdots <f_{k_2} <\{[b_{\mu},a_1]\} < \cdots < \{[b_{\mu},a_{\alpha'}]\}<f_{k_2+1}< \cdots$$     
  
  \item[Op 3:] The operation used in \textsc{Step 1} i.e., if $f_k=\{a,b\}$ for some
  $1 \leq k \leq t_m$, then  
  \begin{align*}
   & \cdots <f_k <\{a,a_1\}< \cdots< \{a,a_{\alpha'}\}<\{b,b_1\}< \cdots< \{b,b_{\beta'}\}<  \{[a_{1},b_1]\} < \cdots < \{[a_1,b_{\beta'}]\}< 
   \\ &\{[a_2,b_1]\} < \cdots < \{[a_2,b_{\beta'}]\} < \cdots 
  < \{[a_{\alpha'},b_{1}]\}<\cdots <\{[a_{\alpha'},b_{\beta'}]\}<f_{k+1}< \cdots
  \end{align*}
\end{enumerate}
 We call the added edges in the above operations
  as \textit{new edges}.

 We make some observations which follows directly from the above discussion.

\begin{obs}\label{oequal} We use the same notation as in Discussion \ref{dis}.
 \begin{enumerate}
 \item 
 Let $\PP$ be the graph associated to 
 $\widetilde{(IJ:ab)}$ for any $ab \in I$. 
 Let $h=\{[c,d]\}$ be  a new edge as in 
 Op 1, Op 2 or Op 3. First note that
 $h \in \mathcal{N}(G)_a \cup \mathcal{N}(G)_{b}$
 or  $c \in N_G(a)$, $d \in N_H(b)$ or 
 $c \in N_H(a)$, $d \in N_G(b)$.
 It follows from 
 Lemma \ref{stu-lemma} that $h \in E(\PP)$. Therefore
 $H_m'$ is a subgraph of $\PP$ for all $1 \leq m \leq \widetilde{n}$.
 Hence $\bigcup\limits_{1 \leq m \leq \widetilde{n}} E(H_m') \subseteq E(\PP)$.
 It is also not hard to verify that $E(\PP) \subseteq \bigcup\limits_{1 \leq m 
 \leq \widetilde{n}} E(H_m')$.
 Therefore $E(\PP)=\bigcup\limits_{1 \leq m \leq \widetilde{n}} E(H_m')$.

 \item Let $g_1<\cdots<g_{t_{m'}}$ be the  ordered sequence whose elements are edges of $H_m'$
 as in \eqref{ord1}.
 Suppose $g_i$ is a new edge as in Op 1 (Op 2 or Op 3)  where $1 \leq i \leq t_{m'}$. Then there exists $g_{i'} \in \mathcal{N}(G)_a$ ($g_{i'} \in \mathcal{N}(G)_b$ or 
 $g_{i'}=\{a,b\}$)
 such that 
 $g_{i'}<g_i$ i.e., $g_1< \cdots <g_{i'}<\cdots <g_i< \cdots< g_{t_{m_1}}$.
 
 \end{enumerate}
\end{obs}

 Now we fix some notation for some of the technical lemmas that are needed for the proof of
the main result.

\begin{notation}
 We use the same notation as in Discussion \ref{dis}. Let 
 $g_1< \cdots < g_{t_{m'}}$ be the ordered sequence 
 whose elements are edges of $H_m'$ as in \eqref{ord1}. 
 For $1 \leq i \leq t_{m'}$, let
  $\mathcal{K}_i$ denote the graph with edge set $\{g_1,\ldots,g_i\}$ 
 and  whose edges appearing in the following ordered sequence
 $g_1<\cdots<g_i$.
\end{notation}

In the next two lemmas, we further  reveals the structure of $H_m'$.
 \begin{lemma}\label{old-new}
 We use the same notation as in Discussion \ref{dis}.
 If $\mathcal{K}_i$ has no induced subgraph isomorphic to $2K_2$ for all 
 $1 \leq i \leq t_{m'}$, then $(V(H_m'),\{\mathfrak{g}_1,\ldots,\mathfrak{g}_j\})$
  has no induced subgraph isomorphic to $2K_2$ for all $1 \leq j \leq t_{m_1}$.
 \end{lemma}
\begin{proof}
 Suppose $(V(H_m'),\{\mathfrak{g}_1,\ldots,\mathfrak{g}_q\})$ has an induced subgraph
 isomorphic to $2K_2$, say $\{\mathfrak{g}_p, \mathfrak{g}_q\}$, for some $1 \leq p< q \leq t_{m_1}$.
 Set $\mathfrak{g}_q=g_s$ for some $1 \leq s \leq t_{m'}$. It can also noted that
 $\mathfrak{g}_p=g_r<g_s$ for some $1 \leq r < s$. 
 Since $\{g_r,g_s\}$ can not form an induced subgraph $2K_2$ in $\mathcal{K}_i$ for all 
 $1 \leq i \leq t_{m'}$, 
 $g_r$ and $g_s$ have a vertex in common or there exist an edge 
$f_l \in E(H_m)$ such
that $f_l < g_{s}$ connecting $g_{r}$ and $g_{s}$.
Note that $f_l \in \{\mathfrak{g}_1,\ldots,\mathfrak{g}_q\})$.
 Both cases we get a contradiction to the assumption. Therefore
 $(V(H_m'),\{\mathfrak{g}_1,\ldots,\mathfrak{g}_j\})$
  has no induced subgraph isomorphic to $2K_2$ for all $1 \leq j \leq t_{m_1}$.
\end{proof}
 
 \begin{lemma}\label{both-edge}
  We use the same notation as in Discussion \ref{dis}. 
  If $\mathcal{K}_j$ has an induced subgraph isomorphic to $2K_2$, say $\{g_i,g_j\}$, for some 
 $1 \leq i< j \leq t_{m'}$, then $g_i,g_j \notin E(H_m)$.
 \end{lemma}
\begin{proof}
 Let $f_1<\cdots< f_{t_m}$ be 
 the ordering of edges of
  $H_m$ as in \eqref{ori-ord}.
 Suppose $g_i,g_j \in E(H_m)$. Set $f_p=g_i$ and $f_q=g_j$ for some $1 \leq p< q \leq t_m$.
 Note that $(V(H_m),\{f_1,\ldots,f_r\})$ has no induced subgraph isomorphic to $2K_2$
 for all $1 \leq r \leq t_m$. Since $g_{i}, g_{j} \in E(H_m)$, by Lemma \ref{main-lemmatech}, they can not form an induced $2K_2$-subgraph of $H_m$. 
Therefore, either $g_{i}$ and $g_{j}$ have a vertex in common or there exist an edge 
$f_l \in E(H_m)$ such
that $f_l < g_{j}$ connecting $g_{i}$ and $g_{j}$. If 
$g_i$ and $g_j$ have a vertex in common in $H_m$, then  this
contradicts the assumption that $\{g_i , g_j\}$ forms an induced $2K_2$-subgraph in 
$\mathcal{K}_j$. If $f_l$ is an edge connecting $g_i$ and $g_j$, then $f_l \in E(\mathcal{K}_j)$.
This is a contradiction to $g_i,g_j \in E(H_m)$. Therefore $g_i,g_j \notin E(H_m)$.
 \end{proof}
 
 Now we prove that the co-chordal cover number of $\PP$ is bounded above by that of $G$.
\begin{lemma}\label{cochord-lemma}
Let $I$ and $J$ be as in Set-up \ref{setup}.
Let $\PP$ be the graph 
associated to 
 $\widetilde{(IJ:ab)}$ for any $ab \in I$. Then
 \[
  \cochord(\PP) \leq \cochord(G).
 \]
\end{lemma}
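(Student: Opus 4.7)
My plan is to convert a minimum co-chordal cover of $G$ into a co-chordal cover of $\PP$ of the same cardinality by absorbing all the new edges and pendant vertices of $\PP$ into a single piece of the cover.

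First, I would fix a co-chordal cover $G_1, \ldots, G_k$ of $G$ with $k = \cochord(G)$. Since $\{a,b\} \in E(G)$, after relabeling we may assume $\{a, b\} \in E(G_1)$. Next, I would catalog $E(\PP) \setminus E(G)$ using Theorem \ref{stu-lemma}: the truly new edges come either from squarefree generators of $K_1 + K_2$, which give edges $\{p,q\}$ with $p \in N_G(a)$ and $q \in N_H(b)$ (and the symmetric version with $K_2$), or from the squares $p^2 \in K_1 + K_2$ which polarize to pendants $\{p, y_p\}$ for $p \in (N_G(a) \cap N_H(b)) \cup (N_H(a) \cap N_G(b))$. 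Since $I \subseteq J$ translates to $H \subseteq G$, every such $p$ and $q$ sits inside $N_G(a) \cup N_G(b)$, and $a, b$ are always joined to them in $G$. I would then define $\PP_1$ to be $G_1$ with all of these new edges and pendant vertices $y_p$ adjoined, and set $\PP_i = G_i$ (with each $y_p$ added as an isolated vertex) for $i \geq 2$. By construction $\bigcup_i E(\PP_i) = E(\PP)$, so these $k$ subgraphs cover $\PP$.

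The remaining step, which is the technical heart of the argument, is to verify that $\PP_1$ is co-chordal by ruling out induced cycles of length $\geq 4$ in $\PP_1^c$. I would split this into two sub-cases. A cycle avoiding every $y_p$ lies inside $G_1^c$ with certain edges deleted (namely the newly added quadratic edges); one must find a chord by combining chordality of $G_1^c$ with the observation that the deleted edges all run between $N_G(a)$ and $N_G(b)$, and thus are controlled by the path $p - a - b - q$ inside $G_1$. A cycle meeting some $y_p$ is highly constrained, since $y_p$ is adjacent in $\PP_1^c$ to every vertex except $p$: a standard argument forces any induced cycle of length $\geq 4$ containing $y_p$ to be a $4$-cycle of the form $y_p - u_1 - p - u_3 - y_p$, requiring $\{u_1, u_3\} \in E(\PP_1)$ with $u_1, u_3 \notin N_{\PP_1}[p]$.

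The principal obstacle I expect is precisely the pendant case: one must ensure there are no two vertices $u_1, u_3$ simultaneously adjacent in $\PP_1$ and non-adjacent to $p$. This may require refining the choice of the original cover so that in $G_1$ every common $G$-neighbor $p$ of $a$ and $b$ is (co-)simplicial enough to admit its pendant, or else redistributing the new edges and pendants among the pieces using the fact that each new edge or pendant is tied to an edge of $G$ incident to $a$ or $b$, and every such edge lies in some piece of the original cover. Once this obstruction is removed, the inequality $\cochord(\PP) \leq k = \cochord(G)$ follows.
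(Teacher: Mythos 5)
Your high-level strategy (turn a minimum co-chordal cover of $G$ into one of $\PP$ of the same size) is the same as the paper's, but the way you assign the new edges is where the argument breaks. Absorbing \emph{all} of $E(\PP)\setminus E(G)$ into the single piece $G_1$ containing $\{a,b\}$ does not work, and the failure is more basic than the pendant issue you single out as the ``principal obstacle'': an ordinary new edge $\{p,q\}$ can form an induced $2K_2$ with an old edge of $G_1$ that has nothing to do with $a$ or $b$. Concretely, take $V(G)=\{a,b,p,q,u,v\}$, $E(G)=\{ab,ap,bq,bu,bv,uv\}$ and $E(H)=\{ab,bq\}$. Then $N_G(a)=\{b,p\}$, $N_H(b)=\{a,q\}$, so the only new edge of $\PP$ is $\{p,q\}$. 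Since $\{ap,uv\}$ is a gap in $G$, $\cochord(G)=2$, realized by the cover $G_1=\{ab,bq,bu,bv,uv\}$, $G_2=\{ap\}$ (both co-chordal), with $\{a,b\}\in E(G_1)$ as you require. But in $\PP_1=G_1\cup\{pq\}$ the edges $\{u,v\}$ and $\{p,q\}$ form an induced $2K_2$ (none of $up,uq,vp,vq$ lies in $E(\PP)$), so $\PP_1$ is not even gap-free. The new edge $\{p,q\}$ is anchored to the $G$-edge $\{a,p\}$, which lives in $G_2$, not in $G_1$; no single piece is entitled to receive all new edges.

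You do flag this possibility and suggest either re-choosing the cover or ``redistributing the new edges among the pieces using the fact that each new edge is tied to an edge of $G$ incident to $a$ or $b$'' --- but that redistribution, together with the verification that each augmented piece stays co-chordal, \emph{is} the entire content of the paper's proof, not a patch. The paper starts from the same cover $H_1,\dots,H_n$, uses the Benzaken--Duchet characterization of co-chordal graphs by an edge ordering all of whose prefixes are $2K_2$-free, and inserts each new edge $\{[a_\mu,b_j]\}$ into the ordering of every piece immediately after its anchor edge $\{a,a_\mu\}$, $\{b,b_\mu\}$ or $\{a,b\}$ (Rules 1--3); the co-chordality of each augmented piece is then established by a long case analysis on prefixes, which is where all the work lies. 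Your proposal does not contain a workable substitute for that step: even granting a ``refined'' cover, you give no argument that one exists in general, and the complement-cycle analysis you sketch for $\PP_1$ is set up for the wrong decomposition. So the proof has a genuine gap at its central step.
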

\begin{proof}
Let $\cochord(G)=\widetilde{n}$. 
Then there exist co-chordal subgraphs $H_1,\ldots,H_{\widetilde{n}}$ of $G$ 
 such that $E(G)= \bigcup \limits_{i=1}^{\widetilde{n}} E(H_i)$. If $E(G)=E(\PP)$, then 
 we are done.  Suppose $E(G) \neq E(\PP)$. We use the same notation as in Discussion \ref{dis}.
 Since $H_m$ is co-chordal, by Lemma \ref{main-lemmatech}, there is an 
ordering of the edges of $H_m$,
 $f_1 < \cdots < f_{t_m},$
 such that for $1\leq r \leq t_{m}, ~ (V(H_m), \{f_1,\ldots,f_r\})$ has no
induced subgraph isomorphic to $2K_2$. 
By Observation \ref{oequal} (1),
we have $E(\PP) = \bigcup\limits_{m=1}^{\widetilde{n}} E(H_m')$.
 Let  $g_1< \cdots < g_{t_{m'}}$
 be the ordered sequence of  edges of
 $H_m'$
  as in $\eqref{ord1}$.
  Now we claim that $\mathcal{K}_r$ has no induced
  subgraph isomorphic to $2K_2$ for all $1 \leq r \leq t_{m'}$. Suppose not i.e.,
  there exists a least $j$ such that
$\mathcal{K}_j$ has an induced $2K_2$-subgraph, say
$\{g_i, g_j\}$ for some $i<j$. First note that  both $g_i$ and $g_j$ can not be  new edges as in Op 1, Op 2, Op 3.
By Lemma \ref{both-edge}, $g_i,g_j \notin E(H_m)$. Therefore, we have the following cases:
\begin{enumerate}
 \item $g_i \in E(H_m)$, $g_j$ is a new edge as in Op 1 or 
 $g_i$ is a new edge as in Op 1, $g_j \in E(H_m)$;  
 \item $g_i \in E(H_m)$, $g_j$ is a new edge as in Op 2 or 
 $g_i$ is a new edge as in Op 2, $g_j \in E(H_m)$;
 \item $g_i \in E(H_m)$, $g_j$ is a new edge as in Op 3 or 
 $g_i$ is a new edge as in Op 3, $g_j \in E(H_m)$;
 \item $g_i$ is a new edge as in Op 1, $g_j$ is a new edge as in Op 2 or 
 $g_i$ is a new edge as in Op 2, 
 $g_j$ is a new edge as in Op 1;
 \item $g_i$ is a new edge as in Op 1, $g_j$ is a new edge as in Op 3 or 
 $g_i$ is a new edge as in Op 3, 
 $g_j$ is a new edge as in Op 1;
 \item $g_i$ is a new edge as in Op 2, $g_j$ is a new edge as in Op 3 or 
 $g_i$ is a new edge as in Op 3, 
 $g_j$ is a new edge as in Op 1;
\end{enumerate}
\vskip 1mm \noindent
\textsc{Case 1:} Suppose $g_i \in E(H_m)$ and 
$g_j$ is a new edge as in Op 1.
Let $g_i=\{u,v\} \in E(H_m)$ and 
$g_j=\{[a_\mu, b_p]\}$
  for some $1 \leq \mu \leq \alpha$, $1 \leq p \leq \beta' $.
  By Op 1, we have
  $g_{j'}=\{a, a_{\mu}\}<g_j.$
 Since $g_i,g_{j'} \in E(H_m)$, they can not form an induced $2K_2$-subgraph of $H_m$. 
Therefore, either $g_{j'}$ and $g_i$ have a vertex in common or there exist an edge $g_l \in E(H_m)$ such
that $g_l < g_{j'}$ connecting $g_i$ and $g_{j'}$. If $g_i$ and $g_{j'}$ have a vertex in common, then this contradicts
the assumption that $\{g_i,g_j\}$ forms an induced $2K_2$-subgraph. Suppose $g_l$ is an edge connecting 
$g_i$ and $g_{j'}$. Let $g_l=\{u,a\}$ and $u \neq b$. Then $g_l \in \mathcal{N}(G)_a$. By Op 1, 
$g_l<\{[u,b_p]\}$. 
We have $g_l<\{[u,b_p]\}<g_{j'}<g_j$.
This is a contradiction to $\{g_i,g_j\}$ is an induced $2K_2$-subgraph.
If $g_l=\{a,b\}$, then by Op 3, 
 $g_l<\{b,b_p\}$. This also contradicts the assumption that $\{g_i,g_j\}$ is an induced $2K_2$-subgraph.
 Similarly, if $g_l=\{u,a_{\mu}\}$ or $g_l=\{v,a\}$ or $g_l=\{v,a_{\mu}\}$,
 then one arrives at a contradiction. Therefore $\{g_i,g_j\}$ can not form an induced 
  $2K_2$-subgraph of $H_m'$ 

  If $g_i$ is a new edge as in Op 1 and $g_j \in E(H_m)$,
 then we get a contradiction in a similar manner.

 \vskip 1mm \noindent
\textsc{Case 2:} Suppose either $g_i\in E(H_m)$ and $g_j$ is a new edge as in Op 2
or $g_j\in E(H_m)$ and $g_i$
is a new edge as in Op 2.
Proceeding as in \textsc{Case 1}, one can show that 
$g_i$ and $g_j$ can not form an induced $2K_2$-subgraph.
 \vskip 1mm \noindent
\textsc{Case 3:} Suppose $g_i\in E(H_m)$ and $g_j$ is a
new edge as in Op 3.
Let $g_i=\{u,v\} \in E(H_m)$. Then $g_j=\{a,a_\mu\}$ for some $1 \leq \mu \leq \alpha'$
or $g_j=\{b, b_{\mu}\}$ for some $1 \leq \mu \leq \beta'$
or $g_j=\{[a_p, b_q]\}$  for some $1 \leq p \leq \alpha'$, $1 \leq q \leq \beta' $.
 If $g_j=\{a,a_\mu\}$ for some $1 \leq \mu \leq \alpha'$, then by Op 3, 
 we have $g_{j'}=\{a,b\}<g_{j}.$ 
Since $g_i, g_{j'} \in E(H_m)$, they can not form an induced $2K_2$-subgraph of $H_m$. 
Therefore, either $g_{j'}$ and $g_i$ have a vertex in common or there exist an edge $g_l \in E(H_m)$ such
that $g_l < g_{j'}$ connecting $g_i$ and $g_{j'}$.
If $g_i$ and $g_{j'}$ have a vertex in common, then this contradicts
the assumption that $\{g_i,g_j\}$ forms an induced $2K_2$-subgraph.
Suppose $g_l$ is an edge connecting 
$g_i$ and $g_{j'}$. If $g_l=\{b,u\} \in \mathcal{N}(G)_b$, then by Op 2, $
g_l<\{[u,a_{\mu}]\}.$
This also contradicts the assumption that $\{g_i,g_j\}$ is an induced $2K_2$-subgraph.
Similarly, if $g_l=\{v,b\}$ or $g_l=\{u,a\}$ or $g_l=\{u,a\}$, then one arrives at a contradiction. 
If $g_j=\{b, b_{\mu}\}$ for some $1 \leq \mu \leq \beta'$, then we get a contradiction in a similar manner. 

Suppose $g_j=\{[a_p, b_q]\}$  for some $1 \leq p \leq \alpha'$, $1 \leq q \leq \beta' $.
By Op 3, we have
 $g_{j'}=\{a,b\}<g_j.$
Since $g_i, g_{j'} \in E(H_m)$, they can not form an induced $2K_2$-subgraph of $H_m$. 
Therefore, either $g_{j'}$ and $g_i$ have a vertex in common or there exist an edge 
$g_l \in E(H_m)$ such
that $g_l < g_{j'}$ connecting $g_i$ and $g_{j'}$.
Suppose  $g_i$ and $g_{j'}$ have a vertex in common. If $u=a$, then 
$g_i \in \mathcal{N}(G)_a$.
By Op 1,
$g_i<\{[v,b_q]\}$. 
Therefore, we have $g_i<\{[v,b_q]\}<g_{j'}<g_j$.
This is a contradiction to $\{g_i,g_j\}$ forms an induced
$2K_2$-subgraph. Similarly, if $u=b$ or $v=a$ or $v=b$, then one arrives at a contradiction.
Suppose $g_l$ is an edge connecting 
$g_i$ and $g_{j'}$.
Note that $g_l<g_{j'}$.
If $g_l=\{u,a\}$, then 
$g_l \in \mathcal{N}(G)_a$.
By Op 1, $g_l<\{[u, b_q]\}$.
This also contradicts the assumption that $\{g_i,g_j\}$ is an induced $2K_2$-subgraph.
Similarly, if $g_l=\{v,b\}$ or $g_l=\{v,a\}$ or $g_l=\{u,b\}$, then one arrives at a contradiction.

If $g_i$ is a new edge as in Op 3 and $g_j \in E(H_m)$, then we get a contradiction in a similar manner.

\vskip 2mm \noindent
\textsc{Case 4:} Suppose $g_i$ is a new edge as in Op 1 and 
$g_j$ is a new edge as in Op 2. 
Let $g_i=\{[a_p,b_q]\}$  and 
$g_j=\{[a_{p'},b_{q'}]\} $ for some
$1 \leq p \leq \alpha$, $1 \leq q \leq \beta'$, $1 \leq p' \leq \alpha'$, 
$1 \leq q' \leq \beta$. Then by Op 1 and Op 2, 
\[
 g_{i'}=\{a, a_p\}<g_i < g_{j'}=\{b,b_{q'}\}<g_j.
\]
Since $g_{i'}, g_{j'} \in E(H_m)$, they can not form an induced $2K_2$-subgraph of $H_m$. 
Therefore, either $g_{i'}$ and $g_{j'}$ have a vertex in common or there exist an edge $g_l \in E(H_m)$ such
that $g_l < g_{j'}$ connecting $g_{i'}$ and $g_{j'}$.
If  $g_{i'}$ and $g_{j'}$ have a vertex in common, then this contradicts
the assumption that $\{g_i,g_j\}$ forms an induced $2K_2$-subgraph. Suppose $g_l$ is an edge connecting 
$g_{i'}$ and $g_{j'}$. If $g_l=\{a_p,b_{q'}\}$, then this contradicts
the assumption that $\{g_i,g_j\}$ forms an induced $2K_2$-subgraph. 
If $g_l=\{a_p,b\}$, then $g_l \in \mathcal{N}(G)_b$. By Op 2, 
$g_l<\{[a_p,a_{p'}]\}$. This also contradicts the assumption that $\{g_i,g_j\}$ is an induced $2K_2$-subgraph.
Similarly, if $g_l=\{a, b_{q'}\}$, then one arrives at a contradiction. If $g_l=\{a,b\}$, then by Op 3, 
$g_l<\{[a_{p'},b_{q'}]\}$. This also contradicts the assumption that $\{g_i,g_j\}$ is an induced
$2K_2$-subgraph. 

If $g_i=\{[a_{p'},b_{q'}]\}$ is a new edge as in Op 2 and $g_j=\{[a_{p},b_{q}]\} $
is a new edge as in Op 1,
 then we get a contradiction in a similar manner.

\vskip 2mm \noindent
\textsc{Case 5:} 
 Suppose $g_i=\{[a_{p'},b_{q'}]\}$ is a new edge as in Op 1 for some 
$1 \leq p' \leq \alpha$, $1 \leq q' \leq \beta'$ and $g_j$ is a new 
edge as in Op 3. Note that
$g_j=\{a,a_\mu\}$ for some $1 \leq \mu \leq \alpha'$
or $g_j=\{b, b_{\mu}\}$ for some $1 \leq \mu \leq \beta'$
or $g_j=\{[a_p, b_q]\}$  for some $1 \leq p \leq \alpha'$, $1 \leq q \leq \beta' $.
Suppose $g_j=\{a,a_\mu\}$ for some $1 \leq \mu \leq \alpha'$. By Op 1, we have
\[
 \{a,a_{p'}\}<g_i< g_j=\{a,a_{\mu}\}. 
\]
This is a contradiction to $\{g_i,g_j\}$ forms an induced $2K_2$-subgraph. Suppose
$g_j=\{b, b_{\mu}\}$ for some $1 \leq \mu \leq \beta'$. Since $g_i$ is a new edge as in Op 1,
we have $$\{a,a_{p'}\}<\{[a_{p'},b_{1}]\}<\cdots<\{[a_{p'},b_{\mu}]\}<\cdots<\{[a_{p'},b_{\beta'}]\}.$$
Therefore $\{[a_{p'},b_{\mu}]\}<g_{j}$. This is a contradiction to $\{g_i,g_j\}$ forms an induced $2K_2$-subgraph.
Suppose $g_j=\{[a_p, b_q]\}$  for some $1 \leq p \leq \alpha'$, $1 \leq q \leq \beta' $.
It can also seen that $\{[a_{p'},b_{q}]\}<g_{j}$. This is a contradiction to $\{g_i,g_j\}$ forms an induced $2K_2$-subgraph.

If $g_i$ is a new edge as in Op 3 and $g_j$
is a new edge as in Op 1,
 then we get a contradiction in a similar manner.

 \vskip 2mm \noindent
\textsc{Case 6:}
 Suppose either $g_i$ is a new edge as in Op 2 and $g_j$ is a new edge as in Op 3
or $g_j$ is a new edge as in Op 3 and $g_i$
is a new edge as in Op 2.
Proceeding as in the \textsc{Case 5}, one can show that 
$g_i$ and $g_j$ can not form an induced $2K_2$-subgraph.

In all cases we get a contradiction to the assumption that
$\mathcal{K}_j$ has an induced $2K_2$-subgraph for some $1 \leq j \leq t_{m'}$. Therefore
$\mathcal{K}_j$ has no induced $2K_2$-subgraph for all $1 \leq j \leq t_{m'}$.
By Lemma \ref{old-new}, $(V(H_m'),\{\mathfrak{g}_1,\ldots,\mathfrak{g}_{r'}\})$
has no induced $2K_2$-subgraph for all $1 \leq r' \leq t_{m'}$.
By Lemma \ref{main-lemmatech}, $H_m'$ is a co-chordal graph. Therefore, $H_m'$ is a co-chordal graph 
for all $1 \leq m \leq \widetilde{n}$.
Hence $\cochord(\PP) \leq \widetilde{n}$.
\end{proof} 

 As a consequence of Lemma \ref{cochord-lemma} one has:

\begin{corollary}\label{rs-cor}
 Let $I$ and $J$ be edge ideals with $I \subseteq J$. If $J$ has a linear minimal free resolution
 and for any $ab \in I$, then 
 $(IJ:ab)$ also has a linear minimal free resolution
\end{corollary}
\begin{proof}
 Let $G$ and $\PP$ be the graphs associated to $J$ and $\widetilde{(IJ:ab)}$ respectively. By
 \cite[Theorem 1]{froberg}, $G$ is a co-chordal graph and by Lemma \ref{cochord-lemma},
 $\PP$ is also co-chordal. Again by \cite[Theorem 1]{froberg}, $\PP$ has a linear minimal free 
 resolution. Therefore, $(IJ:ab)$ has a linear minimal free resolution.
\end{proof}

\section{Upper and lower bound for the regularity of product of two edge ideals}\label{2-edgeideals}
In this section, we obtain a general upper and lower bounds for the regularity of product of two edge ideals.

We start by recalling the notion of 
\textit{upper-Koszul simplicial complexes} associated to monomial ideals. 
 Let $I \subseteq R = \K[x_1,\ldots,x_n]$ be a monomial ideal and 
 let $\alpha=(\alpha_1,\ldots,\alpha_n) \in \mathbb{N}^n$ 
 be a $\mathbb{N}^n$-graded degree. The \textit{upper-Koszul simplicial complex}
 associated to $I$ at degree $\alpha$, 
 denoted by $K^{\alpha}(I)$, is the simplicial complex over 
 $V = \{x_1,\ldots,x_n\}$ whose faces are:
 \[
  \Big\{W \subseteq V \mid \frac{x_1^{\alpha_1} \cdots x_n^{\alpha_n}}{\prod\limits_
  {u \in W}u}
  \in I\Big\}.
 \]

Given a monomial ideal $I$, its $\mathbb{N}^n$-graded Betti numbers are given by the following
formula of Hochster (\cite[Theorem 1.34]{ms2005})
\[
 \beta_{i,\alpha}(I)=\dim_\K \widetilde{H}_{i-1}(K^\alpha(I);\K) \text{ for all }
i \geq 0 \text{ and } \alpha \in \mathbb{N}^n.
 \]

Now,  we  prove the general lower bound for the regularity of product of edge ideals.
One can see that Beyarslan et al., proof of Lemma 4.2 in \cite{selvi_ha}
works more generally and we generalize their argument to prove it below:
\begin{theorem}\label{main-lower} 
Let $J_i=I(G_i)$ be the edge ideal of $G_i$ and $J_1 \subseteq \cdots \subseteq J_d$ 
for all $1 \leq i \leq d$.
  Then $$2d+\nu_{G_1 \cdots G_d}-1 \leq \reg(J_1 \cdots J_d).$$
 
\end{theorem}
\begin{proof}
 Let $f_1,f_2,\ldots,f_{\nu_{G_1 \cdots G_d}}$ be the induced matching of $G_i$ 
 for all $1 \leq i \leq d$. 
 Let
 $Q$ be an induced subgraph of $G_i$  with $E(Q)=\{f_1,\ldots,f_{\nu_{G_1 \cdots G_d}}\}$
 for all $1\leq i \leq d$.
First, we claim that if for any 
$\alpha=(\alpha_1,\ldots,\alpha_n) \in \mathbb{N}^n$ and 
$\supp(\alpha) \subseteq V(Q)$, where $\supp(\alpha)=\{x_i \mid \alpha_i \neq 0\}$, then 
$K^{\alpha}(I(Q)^d)=K^{\alpha}(J_1 \cdots J_d)$.
Clearly, $K^{\alpha}(I(Q)^d) \subseteq K^{\alpha}(J_1 \cdots J_d)$. 
Suppose $W \in K^{\alpha}(J_1 \cdots J_d)$.
Since $\supp(\alpha) \subseteq V(Q)$, we have $W \subseteq V(Q)$.
Then  $m=\frac{x_1^{\alpha_1} 
\cdots x_n^{\alpha_n}}{\prod\limits_{u \in W}u}  \in J_1 \cdots J_d$, which
implies that $g_1 \cdots g_d \mid m$ where $g_i \in J_i$ for all $1\leq i \leq d$.
Clearly $\supp(g_i) \subseteq \supp(m)$ for all $1 \leq i \leq d$.
Therefore $g_i \in I(Q)$ for all $1 \leq i \leq d$. Then 
$m=\frac{x_1^{\alpha_1} 
\cdots x_n^{\alpha_n}}{\prod\limits_{u \in W}u}  \in I(Q)^d$, which
implies that $W \in K^{\alpha}(I(Q)^d)$. Hence the claim.
It follows from \cite[Theorem 1.34]{ms2005} that 
\[
 \beta_{i,\alpha}(I(Q)^d)=\dim_\K \widetilde{H}_{i-1}(K^\alpha(I(Q)^d);\K)=
 \dim_\K \widetilde{H}_{i-1}(K^\alpha(J_1 \cdots J_d);\K)=\beta_{i,\alpha}(J_1 \cdots J_d).
\]
Therefore,
\begin{align*}
 \beta_{i,j}(I(Q)^d)&=\sum\limits_{\alpha \in \mathbb{N}^n,
 ~\supp(\alpha) \subseteq V(Q),~|\alpha|=j}\beta_{i,\alpha}(I(Q)^d)\\
 &=\sum\limits_{\alpha \in \mathbb{N}^n,
 ~\supp(\alpha) \subseteq V(Q),~|\alpha|=j}\beta_{i,\alpha}(J_1 \cdots J_d)\\
 &\leq \sum\limits_{\alpha \in \mathbb{N}^n,
 ~|\alpha|=j}\beta_{i,\alpha}(J_1 \cdots J_d)=\beta_{i,j}(J_1 \cdots J_d).
\end{align*}
Hence $\reg(I(Q)^d) \leq \reg(J_1 \cdots J_d)$.
 By \cite[Lemma 4.4]{selvi_ha},
$  \reg(I(Q)^d)=2d+\nu_{G_1 \cdots G_d}-1.$ Hence $2d+\nu_{G_1 \cdots G_d}-1 \leq \reg(J_1 \cdots J_d).$
\end{proof}

We now prove an upper bound for the regularity of $IJ$.

\begin{theorem}\label{main}
Let $I$ and $J$ be as in Set-up \ref{setup}.
 Then 
\begin{align}\label{eq:main}
 \reg(IJ) \leq \max \{\cochord(G)+3,~\reg(I)\}.
\end{align}
In particular,
\[
  \reg(IJ) \leq \max \{\cochord(G)+3,~\cochord(H)+1\}.
 \]
\end{theorem}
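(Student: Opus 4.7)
The plan is to reduce to the colon ideals studied in Theorem \ref{stu-lemma} and Lemma \ref{cochord-lemma} via an iterated short exact sequence argument. Enumerate $\G(I) = \{e_1, \ldots, e_m\}$ and set $\A_0 = IJ$, $\A_k = IJ + (e_1, \ldots, e_k)$ for $1 \le k \le m$. Since $IJ \subseteq I$, we have $\A_m = I$. For each $k$, the short exact sequence
\[
0 \longrightarrow \bigl(R/(\A_{k-1}:e_k)\bigr)(-2) \xrightarrow{\cdot e_k} R/\A_{k-1} \longrightarrow R/\A_k \longrightarrow 0
\]
yields $\reg(\A_{k-1}) \le \max\{\reg(\A_{k-1}:e_k) + 2,\ \reg(\A_k)\}$. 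Telescoping from $k=1$ to $k=m$ gives
\[
\reg(IJ) \le \max\bigl\{\reg(\A_{k-1}:e_k) + 2 : 1 \le k \le m\bigr\} \cup \{\reg(I)\},
\]
so it suffices to prove $\reg(\A_{k-1}:e_k) \le \cochord(G) + 1$ for every $k$.

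For the colon ideals, observe that
\[
\A_{k-1}:e_k = (IJ:e_k) + \sum_{j<k}(e_j:e_k),
\]
where $(IJ:e_k)$ is the quadratic ideal described in Theorem \ref{stu-lemma}, and each summand $(e_j:e_k)$ is generated by the whole edge $e_j$ when $e_j \cap e_k = \emptyset$ and by a single variable when $e_j$ shares one endpoint with $e_k$. After polarization the ideal becomes squarefree; the single-variable generators can be stripped off using the standard fact that $\reg((x) + L') = \reg(L')$ when the variable $x$ does not appear in the monomial ideal $L'$. What remains is the edge ideal of a graph $\PP'$ obtained from the graph $\PP$ of Lemma \ref{cochord-lemma} by adjoining the additional edges $e_j$ disjoint from $e_k$. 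I would extend the rule-based construction in the proof of Lemma \ref{cochord-lemma} to absorb these edges into the same ordered co-chordal subgraphs, giving $\cochord(\PP') \le \cochord(G)$. Then Katzman--Woodroofe's bound $\reg(I(\PP')) \le \cochord(\PP') + 1$ combined with Corollary \ref{pol_reg} yields $\reg(\A_{k-1}:e_k) \le \cochord(G) + 1$, completing the proof of (\ref{eq:main}).

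The ``in particular'' statement then follows by substituting Katzman--Woodroofe's bound $\reg(I) \le \cochord(H) + 1$ into the right-hand side of (\ref{eq:main}). The principal obstacle is the extension of Lemma \ref{cochord-lemma} to the mixed colon $\A_{k-1}:e_k$: one must weave the new quadratic generators into the ordered co-chordal covers without creating an induced $2K_2$, while simultaneously handling the single-variable generators via polarization and variable elimination. This requires a delicate case analysis analogous to, but more involved than, the five-case ordering argument in Lemma \ref{cochord-lemma}.
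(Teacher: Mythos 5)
Your overall strategy --- the iterated short exact sequences over the generators of $I$, telescoping down to $\A_m=I$, reducing to the bound $\reg(\A_{k-1}:e_k)\le\cochord(G)+1$, and invoking Lemma \ref{cochord-lemma} together with the Katzman--Woodroofe bound --- is exactly the paper's proof of Theorem \ref{main}. The one place you diverge is in your treatment of the mixed colon $\A_{k-1}:e_k$, and the ``principal obstacle'' you flag there is illusory. You write $\A_{k-1}:e_k=(IJ:e_k)+\sum_{j<k}(e_j:e_k)$ and worry that for $e_j$ disjoint from $e_k$ the summand $(e_j:e_k)=(e_j)$ forces you to adjoin new edges to the graph $\PP$ and to rerun (in extended form) the five-case ordering argument of Lemma \ref{cochord-lemma}. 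But $e_j\in\G(I)\subseteq I\subseteq J$, and by Theorem \ref{stu-lemma} the colon $(IJ:e_k)$ already contains $J$; so every such generator $(e_j)$ is redundant, $\PP'=\PP$, and Lemma \ref{cochord-lemma} applies verbatim --- no new case analysis is needed. The only genuinely new generators are the single variables coming from the edges $e_j$ that meet $e_k$, i.e. $\A_{k-1}:e_k=(IJ:e_k)+(\text{variables})$, which is precisely what the paper records. The paper disposes of these variables via \cite[Theorem 1.2]{KalaiMes}, giving $\reg((IJ:e_k)+(\text{variables}))\le\reg(IJ:e_k)$; your alternative of stripping them off also works, but the fact you quote requires the variable not to appear in the remaining ideal, so you must first discard the generators of $(IJ:e_k)$ divisible by that variable (equivalently, pass to the induced subgraph of $\PP$ obtained by deleting that vertex, whose co-chordal cover number is still at most $\cochord(\PP)$). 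With these two observations your argument closes, and the ``in particular'' statement follows from $\reg(I)\le\cochord(H)+1$ exactly as you say.
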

\begin{proof}
Set $I=(f_1,\ldots,f_t)$. It follows
from set of short exact sequences:
 \begin{eqnarray}\label{main-exact-seq1}
  0 & \longrightarrow & \frac{R}{(IJ : f_1)}(-2)
  \overset{\cdot f_1}{\longrightarrow} \frac{R}{IJ} \longrightarrow
  \frac{R}{(IJ , f_1)} \longrightarrow 0;\nonumber \\
& & \hspace*{0.5cm} \vdots\hspace*{2.5cm}\vdots \hspace*{3cm}\vdots \\ 
0 & \longrightarrow & \frac{R}{((IJ,
  f_1,\ldots,f_{t-1}):f_t)}(-2) 
\overset{\cdot f_t}{\longrightarrow}
\frac{R}{(IJ, f_1,\ldots,f_{t-1})} \longrightarrow
\frac{R}{(IJ, I)} \longrightarrow 0, \nonumber
\end{eqnarray} 
 that
\begin{equation*}\label{main:eq} 
  \reg\left(\frac{R}{IJ}\right)  \leq  \max \left\{
	\begin{array}{l}
	  \reg\left(\frac{R}{(IJ :f_1)}\right)+2,\ldots, 
		\reg\left(\frac{R}{(IJ,f_1,\ldots,f_{t-1}):f_t)}\right) + 2,~ 
		\reg\left(\frac{R}{I}\right)
	  \end{array}\right\}.
\end{equation*}
Note that $((IJ,f_1,\ldots,f_{i-1}):f_i)=(IJ:f_i)+(\text{variables})$ for any $1 \leq i \leq t$.
By \cite[Theorem 1.2]{KalaiMes} and Corollary 
   \ref{pol_reg}, we have 
   $\reg((IJ,f_1,\ldots,f_{i-1}):f_i)  \leq \reg((IJ:f_i))
   = \reg(\widetilde{(IJ:f_i)}).$
Let $\PP_i$ be the graph associated to $\widetilde{(IJ:f_i)}$. Therefore, by \cite[Theorem 1]{russ} and 
Lemma \ref{cochord-lemma},
$\reg(\widetilde{IJ:f_i}) \leq \cochord(\PP_i)+1 \leq \cochord(G)+1.$
Hence $\reg(IJ) \leq \max \{\cochord(G)+3, ~\reg(I)\}$.
Now the second assertion follows  from \cite[Theorem 1]{russ}.
 \end{proof}
\begin{remark}
 Let $G$ be a graph and $H$ be a subgraph of $G$. We would like to note here that
the invariant $\cochord(G)$ and $\cochord(H)$ are not comparable in general.  
For example, if $G$ is the graph with 
$E(G)=\{\{x_1,x_2\}$, $\{x_2,x_3\}$, $\{x_3,x_4\}$, $\{x_4,x_5\}$, $\{x_5,x_1\}$, $\{x_1,x_3\}\}$
and $H$ is a subgraph of $G$ with $E(H)=\{\{x_1,x_2\}$, $\{x_2,x_3\}$, $\{x_3,x_4\}$, $\{x_4,x_5\}$,
$\{x_5,x_1\}\}$, 
then $\cochord(G)=1$ and $\cochord(H)=2$. If $G$ is a graph with $E(G)=\{\{x_1,x_2\}$, $\{x_2,x_3\}$,
$\{x_3,x_4\}$,$\{x_4,x_5\}\}$ and $H$ is a graph with 
$E(H)=\{\{x_1,x_2\},\{x_2,x_3\}\}$, then $\cochord(G)=2$ and $\cochord(H)=1$.
\end{remark}

As an immediate consequence, we have the following statements.
\begin{corollary}\label{mat-bound} 
Let $I$ and $J$ be as in Set-up \ref{setup}.
 Then $\reg(IJ) \leq \ma(G)+3.$
\end{corollary}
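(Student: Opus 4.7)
The plan is to deduce this directly from Theorem \ref{main} together with the chain of inequalities \eqref{inva-ineq}, using the fact that $I \subseteq J$ forces $H$ to be a subgraph of $G$.

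First I would invoke Theorem \ref{main} to obtain
\[
\reg(IJ) \leq \max\{\cochord(G) + 3,\ \cochord(H) + 1\}.
\]
So it suffices to bound each of the two quantities inside the maximum by $\ma(G) + 3$.

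For the first term, \eqref{inva-ineq} gives $\cochord(G) \leq \ma(G)$, hence $\cochord(G) + 3 \leq \ma(G) + 3$. For the second term, since $I \subseteq J$ every edge of $H$ is an edge of $G$, so $H$ is a subgraph of $G$ and every matching of $H$ is a matching of $G$; thus $\ma(H) \leq \ma(G)$. Combining with $\cochord(H) \leq \ma(H)$ from \eqref{inva-ineq} yields $\cochord(H) + 1 \leq \ma(G) + 1 \leq \ma(G) + 3$.

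Putting these together, $\max\{\cochord(G) + 3,\ \cochord(H) + 1\} \leq \ma(G) + 3$, and the claimed bound follows. There is no real obstacle here; the statement is a routine consequence of the main theorem and the standard monotonicity $\cochord \leq \ma$ applied to both graphs, so the proof reduces to a two-line comparison.
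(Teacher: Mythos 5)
Your proof is correct and follows essentially the same route as the paper: apply Theorem \ref{main}, then use $\cochord(G) \leq \ma(G)$ and $\cochord(H) \leq \ma(H) \leq \ma(G)$ (the latter since $H$ is a subgraph of $G$) from \eqref{inva-ineq}. You have simply spelled out the two-line comparison that the paper leaves implicit.
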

\begin{proof}
 Since $H$ is a subgraph of $G$, $\ma(H) \leq \ma(G)$. Hence
the assertion follows from Theorem \ref{main}.
\end{proof}
The following example shows that the inequalities given in 
Theorem \ref{main-lower} and 
 Corollary \ref{mat-bound} are sharp.
\begin{example}
 Let $I(H)=(x_2x_3,x_4x_5)$ and $I(G)=(x_1x_2,x_1x_3,x_1x_4,x_1x_5,x_2x_3,x_4x_5)$ be the 
 edge ideals.
 It is not hard to verify that $\ma(G)=2$ and $\nu_{HG}=2$. Therefore, by 
 Theorem \ref{main-lower} and 
 Corollary \ref{mat-bound},
 $\reg(I(H)I(G))=5$.
\end{example}

\begin{corollary} \label{ind-eql} 
 Let $I$ and $J$ be as in Set-up \ref{setup}.
 If $H$ is an induced subgraph of $G$, then 
 \[
  \nu(H)+3 \leq \reg(IJ) \leq \cochord(G)+3.
 \]
 \end{corollary}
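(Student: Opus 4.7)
The plan is to combine the bounds from Theorem \ref{main-lower} and Theorem \ref{main}, using the fact that the hypothesis ``$H$ is an induced subgraph of $G$'' allows us to compare induced matching numbers and co-chordal cover numbers across the two graphs in a clean way.

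For the lower bound, I would first observe that if $e_1,\ldots,e_k$ is an induced matching in $H$, then because $H$ is an induced subgraph of $G$, the subgraph of $G$ induced on the $2k$ endpoints agrees with the subgraph of $H$ induced on those endpoints, so $\{e_1,\ldots,e_k\}$ is also an induced matching in $G$. Hence $\nu(H) \leq \nu_{GH}$. Combining with Theorem \ref{main-lower} gives
\[
\nu(H)+3 \leq \nu_{GH}+3 \leq \reg(IJ).
\]

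For the upper bound, the key observation is that $\cochord(H)\leq \cochord(G)$ whenever $H$ is an induced subgraph of $G$. Indeed, if $G_1,\ldots,G_n$ are co-chordal subgraphs whose edges cover $E(G)$, then their restrictions to $V(H)$ cover $E(H)$; an induced subgraph of a co-chordal graph is co-chordal (taking complements commutes with taking induced subgraphs, and chordality is preserved by induced subgraphs), so this gives a co-chordal edge cover of $H$ with $n$ pieces. Plugging into Theorem \ref{main} yields
\[
\reg(IJ) \;\leq\; \max\{\cochord(G)+3,\ \cochord(H)+1\} \;\leq\; \max\{\cochord(G)+3,\ \cochord(G)+1\} \;=\; \cochord(G)+3.
\]

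There is no real obstacle; both inequalities are essentially one-line consequences of the two main theorems of the section together with the standard monotonicity of $\nu(\cdot)$ and $\cochord(\cdot)$ under induced subgraphs. The only point that needs to be stated carefully is the ``induced'' hypothesis, which is precisely what makes $\nu(H)\leq\nu_{GH}$ and $\cochord(H)\leq\cochord(G)$ hold; without it, neither comparison would be automatic.
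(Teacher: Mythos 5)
Your proof is correct and takes essentially the same approach as the paper, which likewise deduces the corollary from Theorem \ref{main-lower} and Theorem \ref{main} via the observations that $\nu_{GH}=\nu(H)$ and $\cochord(H)\leq\cochord(G)$ for an induced subgraph $H$ of $G$. You simply spell out the monotonicity arguments that the paper states without proof (and, incidentally, your citation of Theorem \ref{main-lower} for the lower bound is the intended reference; the paper's mention of Theorem \ref{stu-lemma} there appears to be a slip).
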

 \begin{proof}
 If $H$ is an induced subgraph of $G$, then $\cochord(H) \leq \cochord(G)$ and $\nu_{HG}=\nu(H)$.
 Therefore, by Theorem \ref{main-lower}, Theorem \ref{main},
 $\nu(H)+3 \leq \reg(IJ) \leq \cochord(G)+3$.
 \end{proof}

\vskip 2mm \noindent
It follows from Corollary \ref{mat-bound} that if 
$G_1$ is a subgraph of $G_2$, then 
\[
 \reg(J_1J_2) \leq 3+\ma(G_2),
\]
where $J_i=I(G_i)$ for all $i=1,2$.
As a natural extension of this result, one tend to think that the same expression 
may hold true for $\reg(J_1 \cdots J_d)$. 
Also, this question is inspired by previous work of the regularity
of powers of edge ideals of graphs (\cite{BBH17}, \cite{jayanthan}, \cite{JS21}). 
More precisely,
we would like to ask:
\begin{question}
 If $G_{i-1}$ is a subgraph of $G_i$ for all $i=2,\ldots,d$,
 is it true that $$\reg(J_1 \cdots J_d) \leq 2d+\ma(G_d)-1,$$ where
 $J_i=I(G_i)$ for all $1 \leq i \leq d$?
 In particular, if $G_{i-1}$ is an induced subgraph of $G_i$ for all $i=2,\ldots,d$,
 is it true that $$\reg(J_1 \cdots J_d) \leq 2d+\cochord(G_d)-1?$$
\end{question}

The following example shows that the above inequality can be equality.

\begin{example}
 Let $J_1=(\{x_{i-1}x_i \mid 5 \leq i \leq 6\})$,
 $J_2=J_3=(\{x_{i-1}x_i \mid 3 \leq i \leq 8\})$ and 
 $J_4=J_5=(\{x_{i-1}x_i \mid 2 \leq i \leq 10\})$ be the edge ideals. 
 Set $J_i=I(G_i)$ for all $1 \leq i \leq 5$.
 A computation on \textsc{Macaulay2} shows that $\reg(J_1 \cdots J_5)=12$.
 Note that $G_{i-1}$ is an induced subgraph of $G_{i}$ for all $2 \leq i \leq 5$ and 
 $\cochord(G_5)=3$. Then $\reg(J_1 \cdots J_5)=12 \leq 2.5+\cochord(G_5)-1=12$.
\end{example}

Let $G_1$ and $G_2$ be graphs with disjoint vertex sets 
(i.e., $V(G_1) \cap V(G_2) = \emptyset$).
The join of $G_1$ and
$G_2$, denoted by $G_1*G_2$, is the graph on the vertex set $V(G_1) \cup V(G_2)$ whose edge set
is $E(G_1 * G_2) = E(G_1) \cup E(G_2) \cup \{\{x, y\} \mid x \in V(G_1) \text{ and } y \in V (G_2)\}.$

\begin{corollary}
 Let $G_1$, $G_2$ be graphs with disjoint edges and $G=G_1*G_2$. If $H=G_1$ or $H=G_2$, then
 \[
  \nu(H)+3 \leq \reg(I(H)I(G)) \leq \max \{\cochord(G_1), \cochord(G_2)\}+3.
 \]
 In particular, if $\cochord(G_1) \leq \cochord(G_2)$ and $H=G_2$, then $\reg(I(H)I(G))=\nu(G_2)+3$.
\end{corollary}
\begin{proof}
 If $H$ is equal to either $G_1$ or $G_2$, then $H$ is an induced subgraph of $G$. 
 Therefore, by Corollary \ref{ind-eql}, $\nu(H)+3 \leq \reg(I(H)I(G)) \leq \max\{\cochord(G)+3,\cochord(H)+1\}$. 
 By \cite[Proposition 4.12]{selva}, $\cochord(G)=\max\{\cochord(G_1),~\cochord(G_2)\}$.
 Therefore $\reg(I(H)I(G)) \leq \max \{\cochord(G_1), \cochord(G_2)\}+3.$
\end{proof}

\section{Precise expressions for the regularity of product of edge ideals}\label{precise}
 
In this section, we explicitly compute the regularity of product of edge ideals for certain
classes of graphs. First, we compute the regularity of $IJ$
when $J$ has linear resolution.

\begin{theorem}\label{regularity} 
Let $I$ and $J$ be edge ideals with $I \subseteq J$.
Suppose $J$ has linear resolution.
\begin{enumerate}
 \item If $\reg(I) \leq 4$, then $IJ$ has linear resolution.
 \item If $5 \leq \reg(I)$, then 
 $\reg(IJ)=\reg(I)$.
\end{enumerate}
\end{theorem}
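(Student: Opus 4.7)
The upper bound follows directly from Theorem~\ref{main}: by Fr\"oberg's theorem, $J$ having linear resolution is equivalent to $G^c$ being chordal, so $G$ is co-chordal and $\cochord(G)=1$. Plugging this into Theorem~\ref{main},
\[
\reg(IJ) \leq \max\{\cochord(G)+3, \reg(I)\} = \max\{4, \reg(I)\}.
\]
For part~(1), the hypothesis $\reg(I) \leq 4$ gives $\reg(IJ) \leq 4$; since $IJ$ is a monomial ideal whose minimal generators all lie in degree~$4$, also $\reg(IJ) \geq 4$, so $\reg(IJ)=4$ and $IJ$ has a linear resolution. For part~(2) the same display yields $\reg(IJ) \leq \reg(I)$, so it remains to prove the matching lower bound $\reg(IJ) \geq \reg(I)$.

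For this lower bound I propose to reuse the chain of short exact sequences from the proof of Theorem~\ref{main}. Writing $I = (f_1,\dots,f_t)$ and $B_i := R/(IJ, f_1, \dots, f_i)$ for $0 \leq i \leq t$, so that $B_0 = R/IJ$ and $B_t = R/I$, these sequences take the form
\[
0 \longrightarrow R/((IJ, f_1, \dots, f_{i-1}) : f_i)(-2) \xrightarrow{\cdot f_i} B_{i-1} \longrightarrow B_i \longrightarrow 0.
\]
Exactly as in the proof of Theorem~\ref{main}, Lemma~\ref{cochord-lemma} together with Corollary~\ref{pol_reg} and \cite[Theorem~1.2]{KalaiMes} gives $\reg((IJ, f_1, \dots, f_{i-1}) : f_i) \leq \cochord(G)+1 = 2$, hence $\reg(R/((IJ, f_1, \dots, f_{i-1}) : f_i)) \leq 1$. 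Passing to the long exact sequence in $\Tor(k,-)$, the connecting term $\Tor_{p-1}(k, R/((IJ,\dots):f_i)(-2))_{p+s}$ equals $\beta_{p-1,\,p+s-2}(R/((IJ,\dots):f_i))$, a Betti number lying in the strand $s-1$, which vanishes as soon as $s-1 > 1$. Therefore, for every $s \geq 3$, a nonzero Betti number $\beta_{p,p+s}(B_i)$ forces $\beta_{p,p+s}(B_{i-1}) \neq 0$.

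Iterating from $i=t$ down to $i=1$, any Betti number of $R/I = B_t$ in a strand $s \geq 3$ propagates back to $R/IJ = B_0$. Under the hypothesis of part~(2), $\reg(R/I) = \reg(I) - 1 \geq 4 \geq 3$, so a top Betti number of $R/I$ sits in a strand $\geq 3$ and therefore appears in $R/IJ$, giving $\reg(R/IJ) \geq \reg(R/I)$, i.e., $\reg(IJ) \geq \reg(I)$. The main obstacle is exactly this lower bound; it is the linearity of $J$—which, through Lemma~\ref{cochord-lemma}, forces each colon ideal to have a $2$-linear resolution—that kills the relevant connecting maps in the Tor long exact sequence and lets high-strand Betti numbers propagate backward along the SES chain without obstruction.
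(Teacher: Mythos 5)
Your proposal is correct and follows essentially the same route as the paper: the upper bound comes from Theorem~\ref{main} with $\cochord(G)=1$ (Fr\"oberg), and the lower bound in part~(2) is obtained by propagating a nonvanishing Betti number of $R/I$ back through the chain of short exact sequences, using that each colon ideal $((IJ,f_1,\dots,f_{i-1}):f_i)$ has regularity at most $2$ (via Lemma~\ref{cochord-lemma} and polarization) to kill the connecting terms in the Tor long exact sequence. Your write-up merely makes the long-exact-sequence bookkeeping slightly more explicit than the paper's; no substantive difference.
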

\begin{proof}
 Suppose $\reg(I) \leq 4$. Since $J$ has linear resolution, by \eqref{eq:main},
  $4 \leq \reg(IJ) \leq \max\{4, ~\reg(I)\}.$
Hence $\reg(IJ)=4$.

Suppose $\reg(I) \geq 5$. By \eqref{eq:main}, we have
 $\reg(IJ) \leq \max\{4, ~\reg(I)\} \leq \reg(I).$
Since $4 \leq \reg(R/I)$, there exist $i,j$ such that $j-i \geq 4$ and $\beta_{i,j}(R/I) \neq 0$.
From Equation \eqref{main-exact-seq1}, either $\beta_{i,j}\left(\dfrac{R}{(IJ, f_1,\ldots,f_{t-1})}\right) \neq 0$ or
$\beta_{i-1,j} \left( \dfrac{R}{(( IJ, f_1,\ldots,f_{t-1}) : f_t)}(-2) \right) \neq 0$. 
Note that
$((IJ,f_1,\ldots,f_{t-1}):f_t)=(IJ:f_t)+(\text{variables}).$ 
Since $J$ has linear resolution, by  Corollary \ref{rs-cor}, 
$(IJ:f_t)$ has linear resolution. Hence 
$(IJ,f_1,\ldots,f_{t-1}):f_t)$ has linear resolution i.e
$\reg((IJ,f_1,\ldots,f_{t-1}):f_t))=2$.
If $\beta_{i-1,j-2} \left( \dfrac{R}{(( IJ, f_1,\ldots,f_{t-1}) : f_t)} \right) \neq 0$, then
$\reg \left( \dfrac{R}{(( IJ, f_1,\ldots,f_{t-1}) : f_t)} \right) \geq j-1-i \geq 4-1=3.$
This is a contradiction to 
$\reg \left( \dfrac{R}{(( IJ, f_1,\ldots,f_{t-1}) : f_t)} \right) \leq 1.$
Therefore, $\beta_{i,j}\left(\dfrac{R}{(IJ, f_1,\ldots,f_{t-1})}\right) \neq 0.$
Then again either $\beta_{i,j}\left(\dfrac{R}{(IJ, f_1,\ldots,f_{t-2})}\right) \neq 0 $ or
$\beta_{i-1,j} \left( \dfrac{R}{(( IJ, f_1,\ldots,f_{t-2}) : f_{t-1})}(-2) \right) \neq 0.$
As in the previous case, we get $\beta_{i,j}\left(\dfrac{R}{(IJ, f_1,\ldots,f_{t-2})}\right) \neq 0$.
Then one proceeds in the same manner. At each stage, we get either
$\beta_{i,j}\left(\dfrac{R}{(IJ, f_1,\ldots,f_{l-1})}\right) \neq 0$ or
$\beta_{i-1,j} \left( \dfrac{R}{(( IJ, f_1,\ldots,f_{l-1}) : f_l)}(-2) \right) \neq 0$  for all  $l$.
Therefore, $\beta_{i,j}\left(\dfrac{R}{IJ}\right) \neq 0$. Hence $ \reg(R/I) \leq \reg(R/IJ).$
\end{proof}

One of the immediate consequence of Theorem \ref{regularity} is the following:

\begin{corollary}
 Let $I$ and $J$ be as in Set-up \ref{setup}.  If 
 $J$ has linear resolution and  $\nu(H) \geq 4$, then $\reg(IJ)=\reg(I)$. In particular,
 \[
  \nu(H)+1 \leq \reg(IJ) \leq \cochord(H)+1.
 \]
\end{corollary}
\begin{proof}
 By \eqref{ag_reg_chg}, $5 \leq \reg(I)$. Therefore, by Theorem \ref{regularity},
 $\reg(IJ)=\reg(I)$. The second assertion follows from \eqref{ag_reg_chg}.
\end{proof}

A graph which is isomorphic to the graph with vertices $a, b, c, d$ and edges 
$\{a, b\}$, $\{b, c\}$, $\{a,c\},$
$\{a, d\}$, $\{c, d\}$
is called a \textit{diamond}. A graph which is isomorphic to the graph with vertices 
$w_1, w_2, w_3, w_4, w_5$ and edges $\{w_1,w_3\}, \{w_2,w_3\}, \{w_3,w_4\}, \{w_3,w_5\}, \{w_4,w_5\}$ 
is called a \textit{cricket}.
A graph without an induced diamond (cricket) is called 
diamond (cricket) -free.

\begin{corollary}\label{se-cl} Let $I$ and $J$ be as in Set-up \ref{setup}. Suppose $J$ has linear resolution. Then $IJ$ has linear
resolution if 
 \begin{enumerate}
  \item $\cochord(H) \leq 3$;
  \item $H$ is  (gap,cricket)-free;
  \item $H$ is (gap, diamond)-free;
  \item $H$ is (gap, $C_4$)-free~~~~~or
  \item $H$ is a graph such that $H^c$ has no triangle;
 \end{enumerate}
\end{corollary}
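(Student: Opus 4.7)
The plan is to apply Theorem \ref{regularity}(1). Since $J$ has a linear resolution, $\reg(J) = 2$, and Theorem \ref{regularity}(1) guarantees that $IJ$ has a linear resolution whenever $\reg(I) \leq 4$. Thus under each of the three hypotheses on $H$, it suffices to prove the single inequality $\reg(I) = \reg(I(H)) \leq 4$, and the corollary follows in one step.

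For part (1), I would invoke the Katzman--Woodroofe inequality \eqref{ag_reg_chg} to conclude $\reg(I) \leq \cochord(H) + 1 \leq 4$. For part (2), I would cite the regularity bound $\reg(I(H)) \leq 3$ for each of the three classes of gap-free graphs: this is the content of \cite[Theorems 3.4 and 3.5]{banerjee}, \cite[Theorem 3.5]{Nur18}, and \cite[Proposition 2.11]{Nursel}, precisely the references used in the proof of the earlier corollary on $\reg(IJ) \leq 6$.

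Case (3) is the only one requiring a separate short argument. The hypothesis that $H^c$ has no triangle is equivalent to the independence number $\alpha(H) \leq 2$, so the independence complex $\operatorname{Ind}(H)$, whose Stanley--Reisner ideal is $I(H)$, has dimension at most $1$. Hochster's formula then forces $\beta_{i,i+j}(R/I(H)) = 0$ for $j \geq 3$, giving $\reg(I(H)) \leq 3 \leq 4$. Alternatively, one can verify that the non-neighbors in $H$ of any fixed vertex form a clique (otherwise three mutually non-adjacent vertices appear) and thereby build an explicit co-chordal edge-cover of $H$ of size at most $3$, after which \eqref{ag_reg_chg} again yields the bound.

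I expect no serious obstacle: each case is essentially a one-line reduction once the right existing result is in hand. The only computation not already recorded in the paper is the Hochster argument in (3), but it is routine. The only bookkeeping point worth double-checking is that the bounds cited in (2) really furnish $\reg(I(H)) \leq 3$ (not some weaker bound), since $\reg(I) \leq 4$ is the sole threshold that must be met for Theorem \ref{regularity}(1) to apply.
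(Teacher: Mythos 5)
Your proposal is correct and follows essentially the same route as the paper: reduce each case to the single inequality $\reg(I)\leq 4$ and then invoke Theorem \ref{regularity}(1), with cases (1) and (2) handled by exactly the citations the paper uses. The only deviation is in case (3), where the paper simply cites \cite[Theorem 2.10]{MorKiani} while you supply a short self-contained argument (triangle-free complement $\Rightarrow$ independence complex of dimension $\leq 1$ $\Rightarrow$ $\reg(I(H))\leq 3$ via Hochster's formula), which is a valid and harmless substitution.
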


\begin{proof}
 By \eqref{ag_reg_chg},  \cite[Theorem 3.4]{banerjee},
 \cite[Theorem 3.5]{Nur18}, \cite[Proposition 2.11]{Nursel} and \cite[Theorem 2.10]{MorKiani},
 $\reg(I) \leq 4$. Therefore, by Theorem \ref{regularity}, $IJ$ has linear resolution.
\end{proof}

So far, we had been discussing about the regularity of product of two edge ideals. Now we study
the regularity of product of more than two edge ideals.
\begin{theorem} \label{prd-ideals}
 Let $J_1,\ldots,J_d$ be edge ideals and $J_1 \subseteq J_2 \subseteq 
 \cdots \subseteq J_d$, $d \in \{3,4\}$. Suppose $J_d$ is the edge ideal of a complete graph.
 \begin{enumerate}
  \item If $\reg(J_1\cdots J_{d-1}) \leq 2d$, then $J_1 \cdots J_d$ has linear resolution.
  \item If $\reg(J_1\cdots J_{d-1}) \geq 2d+1$, then 
   $\reg(J_1 \cdots J_d)=\reg(J_1 \cdots J_{d-1})$.
 \end{enumerate}
\end{theorem}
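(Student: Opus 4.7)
The plan is to mimic the structural approach of Theorem \ref{regularity}, treating $K := J_1 J_2 \cdots J_{d-1}$ as the ``inner'' ideal and $J_d$ as the ``outer'' one with linear resolution. Enumerate $\MG(K) = \{m_1,\ldots,m_t\}$ (each $m_i$ has degree $2(d-1)$) and iterate the standard short exact sequences
\[
0 \to \tfrac{R}{((KJ_d,m_1,\ldots,m_{i-1}):m_i)}(-2(d-1)) \xrightarrow{\cdot m_i} \tfrac{R}{(KJ_d,m_1,\ldots,m_{i-1})} \to \tfrac{R}{(KJ_d,m_1,\ldots,m_i)} \to 0.
\]
Writing $C_i := ((KJ_d, m_1, \ldots, m_{i-1}) : m_i)$ and using $(KJ_d,K)=K$, this yields
\[
\reg(R/KJ_d) \leq \max\left\{\max_{i}\reg(R/C_i)+2(d-1),\ \reg(R/K)\right\}.
\]

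The central technical step is to show $\reg(R/C_i) \leq 1$, i.e.\ that each $C_i$ has a linear resolution. Decompose $C_i = (KJ_d : m_i) + ((m_1,\ldots,m_{i-1}) : m_i)$. Since $J_d$ is the edge ideal of a complete graph and $J_l \subseteq J_d$ forces $V(G_l)\subseteq V(G_d)$ for every $l$, any two distinct variables appearing in the factor supports form an edge of $J_d$. Writing $m_i = e_1 e_2 \cdots e_{d-1}$ with $e_l \in \MG(J_l)$, the generators of $(KJ_d:m_i)$ beyond $J_d$ arise from neighbors of $\supp(m_i)$ in the various $G_l$, in the spirit of Theorem \ref{stu-lemma}. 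After polarization I expect $(KJ_d:m_i)$ to be the edge ideal of a graph $\PP_i$ obtained from the complete graph $G_d$ by attaching pendant-like edges, and a direct adaptation of Lemma \ref{cochord-lemma} should yield $\cochord(\PP_i)\leq\cochord(G_d)=1$, hence linear resolution. For the second summand, each $m_l/\gcd(m_l,m_i)$ is analyzed by cases using the product factorizations: degree-$2$ squarefree generators already lie in $J_d\subseteq (KJ_d:m_i)$; degree-$1$ generators are variables absorbed without affecting regularity by \cite[Theorem 1.2]{KalaiMes} together with Corollary \ref{pol_reg}; and for $d\in\{3,4\}$ the remaining possibilities of degree $3$ or $4$ should reduce to one of the previous two cases by repeatedly invoking the completeness of $G_d$.

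Granted $\reg(R/C_i) \leq 1$, the displayed inequality becomes $\reg(R/KJ_d) \leq \max\{2d-1,\reg(R/K)\}$. For part (1), the hypothesis $\reg(K)\leq 2d$ gives $\reg(KJ_d)\leq 2d$, which combined with $\reg(KJ_d)\geq 2d$ (generator degrees) forces linear resolution. For part (2), the hypothesis $\reg(K)\geq 2d+1$ gives $\reg(KJ_d)\leq\reg(K)$; the matching lower bound is established by the Betti-survival argument of Theorem \ref{regularity}(2). Choose $(p,q)$ with $\beta_{p,q}(R/K)\neq 0$ and $q-p=\reg(R/K)\geq 2d$, and trace this class backwards through the $\Tor$ long exact sequences associated to the short exact sequences above. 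At each step the required vanishing $\Tor_{p-1}(R/C_l(-2(d-1)),\K)_q=0$ holds once $q-p\geq 2d-1$, which is guaranteed since $q-p\geq 2d$, so the nonzero Betti class of $R/K$ survives all the way to $R/KJ_d$ and yields $\reg(KJ_d)\geq\reg(K)$.

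The main obstacle is the linear resolution claim for $C_i$: unlike the setup of Lemma \ref{cochord-lemma}, $K$ is not itself an edge ideal and $m_i$ has degree $2(d-1)>2$, so one must simultaneously control the structure of $(KJ_d:m_i)$ (generalizing the Theorem \ref{stu-lemma}/Lemma \ref{cochord-lemma} analysis) and the extra contributions from $((m_1,\ldots,m_{i-1}):m_i)$. The restriction $d\in\{3,4\}$ reflects precisely that the case analysis driven by the possible $\gcd$ overlaps among products of $d-1$ edges remains tractable for small $d$, but the number of overlap patterns proliferates rapidly for $d\geq 5$.
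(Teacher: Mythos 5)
Your overall architecture coincides with the paper's: mod out the generators $\FA_1,\ldots,\FA_t$ of $J_1\cdots J_{d-1}$ one at a time via the standard short exact sequences, reduce everything to showing that each colon ideal $C_i=((\mathcal{J},\FA_1,\ldots,\FA_{i-1}):\FA_i)$ has regularity $2$, and then run the two-sided endgame of Theorem \ref{regularity} (upper bound from the exact sequences, lower bound by Betti survival). That endgame, including $\reg(R/\mathcal{J})\le\max\{2d-1,\reg(R/K)\}$ and both conclusions, is correct as you state it.

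The gap is exactly where you flag it yourself: you never prove that $C_i$ has linear resolution, you only ``expect'' it, and the route you sketch (adapting Lemma \ref{cochord-lemma} to bound $\cochord(\PP_i)$) is neither carried out nor actually needed. The paper's argument is more elementary and rests on two points you omit. First, since $J_d\subseteq(\mathcal{J}:\FA_i)$ and $G_d$ is complete, \emph{every} squarefree quadratic in the relevant variables already lies in the colon ideal; hence any minimal generator $m$ with $|\supp(m)|\ge 2$ is forced to be such a quadratic, and the only possible troublemakers are pure powers $m=u^s$. Second --- and this is the genuinely missing step --- one must rule out $s\ge 3$. The paper does this with a separate divisibility claim: if $(\FA_j:\FA_i)=(u^s)$ with $s\ge 3$ (possible only for $d=4$, since for $d=3$ the $u$-exponent of $\FA_j=g_1g_2$ is at most $2$), then writing $\FA_j=(ua)(ub)(uc)$ and $\FA_i=f_1f_2f_3$ with $abc\mid f_1f_2f_3$, an explicit refactorization, using completeness of $G_d$ to certify the leftover degree-two factor is an edge, shows $u^2\FA_i\in\mathcal{J}$; hence $u^2\in(\mathcal{J}:\FA_i)$ and $u^s$ is not minimal. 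Without this, your ``complete graph plus pendant-like edges'' picture is unsubstantiated: a minimal generator $u^3$ would mean $C_i$ is not even quadratically generated, so no edge-ideal or co-chordality argument applies. Once quadratic generation is established, the polarized colon ideal is the edge ideal of a complete graph with pendants attached, which is co-chordal by inspection, and Fr\"oberg gives $\reg(C_i)=2$ directly --- no covering argument in the style of Lemma \ref{cochord-lemma} is required. The same pure-power analysis is what disposes of your unresolved degree-$3$ and degree-$4$ cases in $((m_1,\ldots,m_{i-1}):m_i)$, and it is precisely where the restriction $d\in\{3,4\}$ enters.
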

\begin{proof}
Set $\mathcal{J}:=J_1 \cdots J_d$ and $J_1\cdots J_{d-1}=(\FA_1,\ldots,\FA_t)$.
Now we claim that, if 
$(\FA_j:\FA_i)=(u^s)$ for some $s \geq 3$ and $j \neq i$, then $u^2 \in 
(\mathcal{J}:\FA_i)$. 
Clearly $d>3$.
 Set $\FA_j=g_1g_2g_3$ and $\FA_i=f_1f_2f_3$, where $g_i,f_i \in J_i$
 for all $1 \leq i \leq 3$. Since $s \geq 3$, we have $u \mid g_i$ and $u \nmid f_i$ for all $1 \leq i \leq 3$.
 Set $g_1=ua$, $g_2=ub$, $g_3=uc$, $f_1=x_1x_2$, $f_2=x_3x_4$ and  $f_3=x_5x_6$ ($x_i$ may be equal to $x_j$, for some $1 \leq i,j \leq 5$). Note that $abc \mid f_1f_2f_3$.
 If $ab \mid f_i$ and $c \mid f_j$, for some $1 \leq i,j \leq 3$, 
 then $uaubf_jf_k \in \mathcal{J}$, where $k \neq i,j$.
  If $a \mid f_i$, $b \mid f_j$, $c \mid f_k$ for some 
 $1 \leq i,j,k \leq 3$, then $uaub f_k (\frac{f_if_j}{ab}) \in \mathcal{J}$. 
 Therefore $u^2 \in (\mathcal{J}:\FA_i)$. Hence the claim.

 Let $m \in \mathcal{G}(\mathcal{J}:\FA_i).$ By degree consideration $m$ can not have degree 1.
We now claim that $\deg(m)=2$. 
  Suppose $|\supp(m)| \geq 2$.
Since $J_d$ is a edge ideal of complete graph, $\deg(m)=2$.
Suppose $|\supp(m)|=1$. Assume that $\deg(m) \geq 3$. Set $m=u^s$ for some 
$s \geq 3$. Clearly $n_1 \cdots n_d \mid u^s\FA_i$,
where $n_l \in \mathcal{G}(J_l)$ for all $1 \leq l \leq d$.
Then $n_1 \cdots n_{d-1} \mid u^s\FA_i$. Also, 
$u^s \in (n_1 \cdots n_{d-1}:\FA_i)$. By above claim, $u^2 \in (\mathcal{J}:\FA_i)$.
This is contradiction to $\deg(m) \geq 3$. Therefore
$\deg(m)=2$.

 By the above arguments, one can see that the ideal $((\mathcal{J},\FA_1,\ldots,\FA_{i-1}):\FA_i)$ is 
 generated by quadratic monomial ideals. 
 Note that $J_d \subseteq (\mathcal{J}:\FA_i)$. 
 Let $K_i$ be the graph associated
 to $\widetilde{((\mathcal{J},\FA_1,\ldots,\FA_{i-1}):\FA_i)}$.
 Since $J_d$ is the edge ideal of complete graph, $K_i$ is the graph obtained from
 complete graph by attaching pendant to  some vertices. Hence 
 $K_i$ is a co-chordal graph. 
 By \cite[Theorem 1]{froberg}, 
 $\reg((\mathcal{J},\FA_1,\ldots,\FA_{i-1}):\FA_i))=2$ for all $1 \leq i \leq t$.
 
 Consider the similar exact sequences as in \eqref{main-exact-seq1}, we get 
 \begin{equation*}\label{main:eq} 
  \reg\left(\frac{R}{\mathcal{J}}\right)  \leq  \max \left\{ 
	\begin{array}{l}
	  \reg\left(\frac{R}{(\mathcal{J} :\FA_1)}\right)+2(d-1),\ldots, 
		\reg\left(\frac{R}{(\mathcal{J},\FA_1,\ldots,\FA_{t-1}):\FA_t)}
		\right) + 2(d-1),~ \\
		\reg\left(\frac{R}{J_1 \cdots J_{d-1}}\right)
	  \end{array}\right\}.
\end{equation*}
Therefore
 $\reg\left(\frac{R}{\mathcal{J}}\right)  \leq  \max \left\{ 
	\begin{array}{l}
	  2d, ~\reg\left(\frac{R}{J_1 \cdots J_{d-1}}\right)
	  \end{array}\right\}.$
Proceeding as in the proof of Theorem \ref{regularity} we will get the desired conclusion. 
\end{proof}

As an immediate consequence of Theorem \ref{main}, Theorem \ref{prd-ideals}, we obtain an upper bound for the 
regularity of product of edge ideals in terms of co-chordal cover numbers.
\begin{corollary}\label{prd-us}
 Let $J_i=I(G_i)$ be the edge ideal of $G_i$ for all $1 \leq i \leq d$ and $J_1 \subseteq 
 \cdots \subseteq J_d$.
\begin{enumerate}
 \item If $G_3$ is a complete graph, then 
 \[
  \reg(J_1J_2J_3) \leq \max\{6,~\cochord(G_2)+3,~\cochord(G_1)+1\}.
 \]
\item If $G_i$ is a complete graph for all $i=3,4$, then 
 \[
  \reg(J_1J_2J_3J_4) \leq \max\{8,~\cochord(G_2)+3,~\cochord(G_1)+1\}.
 \]
\end{enumerate}
\end{corollary}
As a consequence of Theorem \ref{prd-ideals}, we give sufficient conditions 
for product of edge ideals to have linear resolutions.

\begin{corollary}\label{ls-pr}
 Let $J_i=I(G_i)$ be the edge ideal of $G_i$ for all $1 \leq i \leq d$ and $J_1 \subseteq 
 \cdots \subseteq J_d$. 
 \begin{enumerate}
  \item If $G_3$ is a complete graph and $ \max\{\cochord(G_2)+3, \cochord(G_1)+1\} \leq 6$, then $J_1J_2J_3$ has 
  linear resolution.
  \item If $G_i$ is a complete graph for all $i=3,4$ and 
  $\max\{\cochord(G_2)+3,~\cochord(G_1)+1\} \leq 8$, then 
  $J_1J_2J_3J_4$ has   linear resolution.
  \item If $G_4$ is a complete graph and $G_i$ is an induced subgraph of $G_{i+1}$ for all $1 \leq i \leq 3$, then 
  $J_1J_2J_3J_4$ has   linear resolution.
  \item If $G_i$ is a complete graph for all $i=3,4$ and $J_1J_2$ has linear resolution, then
  $J_1J_2J_3J_4$ has linear resolution.
 \end{enumerate}
\end{corollary}
\begin{proof}
 (1) and (2): The assertions follow from Theorem \ref{main} and Theorem \ref{prd-ideals}.
 \vskip 0.5mm \noindent
 (3) Since $G_4$ is a complete graph and $G_i$ is an induced subgraph of $G_{i+1}$ for all $1 \leq i \leq 3$,
 $G_i$ is a complete graph for all $1 \leq i \leq 3$. Therefore, by Corollary \ref{prd-us}(1),
 $J_1J_2J_3$ has linear resolution. Hence, by Theorem \ref{prd-ideals}, $J_1J_2J_3J_4$ has linear resolution.
\vskip 1mm \noindent
(4) If $J_1J_2$ has linear resolution, then by Theorem \ref{prd-ideals}, $J_1J_2J_3$ has linear
resolution. Therefore, by Theorem \ref{prd-ideals}, $J_1J_2J_3J_4$ has linear resolution.
\end{proof}

\vskip 2mm
\noindent
\textbf{Acknowledgement:} 
The computational commutative algebra
package Macaulay2\cite{M2} was heavily used to compute several
examples.
The third author is partially supported by DST, Govt of India under
the DST-INSPIRE (DST/Inspire/04/2019/001353) Faculty Scheme.
We would also like to express our sincere gratitude
to anonymous referee for meticulous reading and suggesting several improvements.

\bibliographystyle{abbrv}
\bibliography{refs_reg} 
\end{document}